\documentclass[10 pt]{article}  
\usepackage[utf8]{inputenc}
\usepackage{amsmath}
\usepackage{amsfonts}
\usepackage{amssymb}
\usepackage{graphicx}
\usepackage{mathrsfs}
\usepackage{upref,amsthm,amsxtra,exscale}
\usepackage{stmaryrd}
\usepackage{cite}
\usepackage[colorlinks=true,urlcolor=blue,
citecolor=red,linkcolor=blue,linktocpage,pdfpagelabels,
bookmarksnumbered,bookmarksopen]{hyperref}
\usepackage{doi}
\usepackage{todonotes}

\usepackage[cm]{fullpage}
\usepackage{subcaption}
\usepackage{caption}
\usepackage{cleveref}
\usepackage{enumitem}
\usepackage{authblk}

\newcommand{\pv}{\operatorname{p.\!v.}}
\providecommand{\R}{\mathbb{R}}

\providecommand{\N}{\mathbb{N}}

\def\eps{\varepsilon}

\newtheorem{theorem}{Theorem}[section]

\newtheorem{lemma}[theorem]{Lemma}

\numberwithin{equation}{section}

\theoremstyle{definition}
\newtheorem{remark}[theorem]{Remark}

\title{A note on a Pohozaev identity for the fractional Green function}
\author[1]{Abdelrazek Dieb}
\author[2]{Isabella Ianni}
\affil[1]{\small{University Ibn Khaldoun of Tiaret, Algeria}}
\affil[2]{\small{{\sl Sapienza} Universit\`a di Roma, Italy}}
\date{}

\begin{document}

\maketitle

\begin{abstract}
We get a Pohozaev-type identity for the fractional Green function, which extends to the fractional setting a classical  result by Brezis and Peletier \cite{BrezisPeletier}. Our result complements with the recent ones in \cite{DjitteSueur} concerning a representation formula for the gradient of the fractional Robin function.
\vskip 2mm

\textbf{Keywords:} fractional Laplacian, Pohozaev identity, Green function, Robin function.\vskip 2mm

\textbf{Acknowledgements:} This work is supported by the 
ICTP-INdAM Collaborative Grants and Research in Pairs Program

\end{abstract}

\section{Introduction}\label{s:1}

Let us consider the  fractional Laplace operator of order $s$, with $s\in (0,1)$, 
$$
(-\Delta)^su (z) := c_{N,s}\,  \pv \int_{\R^N}\frac{u(z)-u(y)}{|z-y|^{N+2s}}dy,
$$
with  normalization constant given by
$$c_{N,s} :=\pi^{-\frac{N}{2}}s4^s\frac{\Gamma(\frac{N+2s}{2})}{\Gamma(1-s)},$$
where $\Gamma$ is the gamma function, $\pv$ refers to the Cauchy principal value, and $N\in\mathbb N^*$. For $x\in\Omega$, we denote by $G_s(x,\cdot)$ the Green function associated to the operator $(-\Delta)^s$ in the domain $\Omega$, that is, the solution to \begin{equation} \label{DIR}
\left\{ \begin{array}{rcll}
    (-\Delta)^sG_s(x,\cdot)&=&\delta_x& \quad\text{in}\quad \mathcal D'(\Omega)\\
    G_s(x,\cdot)&=&0&\quad\text{in}\quad \R^N\setminus\Omega,
    \end{array} 
    \right.
\end{equation}
where $\delta_x$ denotes the Dirac delta distribution at $x$. The Green function $G_s(x,\cdot)$ can be split into
\begin{equation}\label{Eq-spliting-of-Green}
    G_s(x,\cdot)= F_s(x,\cdot)-H_s(x,\cdot),
\end{equation}
where 
\begin{equation}\label{funda}
   F_s(x,\cdot) :=\frac{b_{N,s}}{|x-\cdot|^{N-2s}}, 
\end{equation}
 is the  fundamental solution of $(-\Delta)^s$, with $b_{N,s}:=-c_{N,-s} (>0)$,
 hence 
$H_s(x,\cdot)$,  the regular part of the fractional Green function $G_s(x,\cdot)$,  solves the equation
\begin{equation}\label{regular-part-of-Green}
\left\{ \begin{array}{rcll} (-\Delta)^s H_s(x,\cdot)&=& 0  &\textrm{in }\Omega, \\ H_s(x,\cdot)&=&F_s(x,\cdot)&
\textrm{in }\R^N\setminus\Omega. \end{array}\right. 
\end{equation}
We also recall that the fractional Robin function associated with the domain $\Omega$, denoted as $\mathcal R_s:\Omega\to \R$ is given for $x$ in $\Omega$ by 
\begin{equation}\label{def-s-robin}
\mathcal R_s(x):= H_s(x,x).
\end{equation}
In \cite[Theorem 4.3]{BrezisPeletier} it is proved that the Robin function $\mathcal R_1$ of $\Omega$ associated with the classical Laplacian (namely in the case $s=1$) satisfies the following Pohozaev-type identity
     \begin{align}\label{repres-class-Robin}
         \mathcal R_1(x)=\frac{1}{N-2}\int_{\partial\Omega}(\partial_\nu G_1(x,\cdot))^2\langle \cdot-x,\nu \rangle \, d\sigma,\qquad\mbox{for any }x\in\Omega,
     \end{align}
where $G_1$ is the Green function in the case of the Laplacian, and $N>2$. In this note we derive the analogous representation formula for the  fractional Robin function $\mathcal R_s$, when $s\in (0,1)$. 
\begin{theorem}
\label{MainTheorem}Let $N\in\mathbb N$, $s\in (0,1)$, $N>2s$ and let $\Omega$ be a bounded open set of $\mathbb R^N$ of class $C^{1,1}$. 
Then, for every $x\in\Omega$ we have
\begin{equation}
\label{PohozaevGreen}
\mathcal R_s(x)=\frac{\Gamma^2(1+s)}{N-2s}\int_{\partial\Omega}\left( \frac{G_s(x, \cdot)}{\delta^s}\right)^2 \langle \cdot-x,\nu \rangle \, d\sigma,
\end{equation}
where $\nu$ is the outward unit normal to $\partial\Omega$ and $\delta:=dist (\ \cdot\ , \mathbb R^N\setminus\Omega)$.
\end{theorem}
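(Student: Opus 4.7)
The plan is to prove \eqref{PohozaevGreen} by approximating $G_s(x,\cdot)$ by smooth solutions, applying the Ros-Oton--Serra fractional Pohozaev identity to each approximation, and cancelling the divergent contributions from the concentrated source via the whole-space Pohozaev identity for the Newtonian potential.

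Fix $x\in\Omega$ and let $\eta_\epsilon$ be a smooth radial mollifier supported in $B_\epsilon(x)\subset\Omega$ with $\int_{\R^N}\eta_\epsilon=1$. Let $u_\epsilon$ be the unique solution of $(-\Delta)^s u_\epsilon=\eta_\epsilon$ in $\Omega$ with $u_\epsilon=0$ in $\R^N\setminus\Omega$. Ros-Oton--Serra regularity yields $u_\epsilon\in C^s(\R^N)$ and $u_\epsilon/\delta^s\in C^\alpha(\overline\Omega)$, so their fractional Pohozaev identity (centered at $x$) applies:
$$\int_\Omega\langle y-x,\nabla u_\epsilon\rangle\eta_\epsilon\,dy=\frac{2s-N}{2}\int_\Omega u_\epsilon\eta_\epsilon\,dy-\frac{\Gamma(1+s)^2}{2}\int_{\partial\Omega}\left(\frac{u_\epsilon}{\delta^s}\right)^2\langle y-x,\nu\rangle\,d\sigma.$$

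The crucial step is the decomposition $u_\epsilon=U_\epsilon-H_\epsilon$, where $U_\epsilon(y):=\int_{\R^N}F_s(y,z)\eta_\epsilon(z)\,dz$ is the whole-space Newtonian potential of $\eta_\epsilon$, satisfying $(-\Delta)^s U_\epsilon=\eta_\epsilon$ in $\R^N$, and $H_\epsilon$ is its fractional harmonic extension in $\Omega$. The whole-space Pohozaev identity $\int_{\R^N}\langle y-x,\nabla U\rangle(-\Delta)^s U\,dy=\frac{2s-N}{2}\int_{\R^N}U(-\Delta)^s U\,dy$ (itself obtained by rescaling the Dirichlet energy under $U\mapsto U(x+\lambda(\cdot-x))$), when applied to $U_\epsilon$, reduces to $\int_\Omega\langle y-x,\nabla U_\epsilon\rangle\eta_\epsilon\,dy=\frac{2s-N}{2}\int_\Omega U_\epsilon\eta_\epsilon\,dy$. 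Subtracting this from the previous display cancels the two individually divergent contributions (each of order $\epsilon^{2s-N}$) and leaves
$$-\int_\Omega\langle y-x,\nabla H_\epsilon\rangle\eta_\epsilon\,dy=\frac{N-2s}{2}\int_\Omega H_\epsilon\eta_\epsilon\,dy-\frac{\Gamma(1+s)^2}{2}\int_{\partial\Omega}\left(\frac{u_\epsilon}{\delta^s}\right)^2\langle y-x,\nu\rangle\,d\sigma.$$

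Passing to the limit $\epsilon\to 0$ concludes the argument. Interior regularity for the fractional Dirichlet problem (with exterior data $F_s(x,\cdot)$, which is smooth on $\R^N\setminus\Omega$) ensures that $H_\epsilon\to H_s(x,\cdot)$ in $C^1$ on a neighborhood of $x$, so the left-hand side tends to $\langle y-x,\nabla_y H_s(x,y)\rangle|_{y=x}=0$ and $\int_\Omega H_\epsilon\eta_\epsilon\,dy\to H_s(x,x)=\mathcal R_s(x)$; rearranging then yields \eqref{PohozaevGreen}. The main technical obstacle is the passage to the limit in the boundary integral, which requires the convergence $u_\epsilon/\delta^s\to G_s(x,\cdot)/\delta^s$ in a topology strong enough (e.g.\ $L^2(\partial\Omega)$) to handle the quadratic expression. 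This should follow from a uniform-in-$\epsilon$ application of the boundary regularity estimates in the spirit of \cite{DjitteSueur}, combined with the fact that $u_\epsilon\to G_s(x,\cdot)$ smoothly away from $x$.
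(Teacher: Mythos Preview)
Your route differs from the paper's. The paper first proves the bilinear identity \eqref{bilinearPohozaevGreen} (Theorem~\ref{MainTheoremBilinear}) by applying the Djitte--Fall--Weth integration-by-parts formula \eqref{poho_DFW} to the pair $\eta_k\psi_{\mu,x}G_s(x,\cdot)$, $\eta_k\psi_{\gamma,y}G_s(y,\cdot)$ with $x\neq y$, where $\eta_k,\psi_{\mu,x},\psi_{\gamma,y}$ are cutoffs near $\partial\Omega$ and near the two singularities; Theorem~\ref{MainTheorem} then drops out in one line by letting $y\to x$. Your mollified-source strategy is instead the direct fractional analogue of the original Brezis--Peletier argument, and is in fact precisely the method the paper adopts for the local case $s=1$ in the proof of Theorem~\ref{Main_local_TheoremBilinear_s=1} (with $\delta_{\rho,a}=|B_\rho(a)|^{-1}\chi_{B_\rho(a)}$ in place of your $\eta_\epsilon$). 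The paper's detour through two distinct singular points buys the bilinear formula \eqref{bilinearPohozaevGreen} as an independent result; your route is more economical if one only wants \eqref{PohozaevGreen}.

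The step you flag as the main obstacle is the one that genuinely needs work, and the justification you sketch is too vague. A direct appeal to the Ros-Oton--Serra boundary estimate $\|u/\delta^s\|_{C^\alpha(\overline\Omega)}\le C\|(-\Delta)^s u\|_{L^\infty}$ fails outright, since $\|\eta_\epsilon\|_{L^\infty}\sim\epsilon^{-N}\to\infty$. What does work is the representation $u_\epsilon=\int_{B_\epsilon(x)}G_s(\cdot,z)\,\eta_\epsilon(z)\,dz$: the Green-function estimate \eqref{greenfunctEstimate} already gives a uniform bound $u_\epsilon/\delta^s\le C(x)$ in a neighborhood of $\partial\Omega$, and the regularity in \eqref{fracLappsiGBounded}--\eqref{Regularity_PsiG} (applied with $z$ in place of $x$, the constants being uniform for $z$ in a fixed compact neighborhood of $x$) shows that $(G_s(z,\cdot)/\delta^s)\big|_{\partial\Omega}$ depends continuously on $z$. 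Together these give $u_\epsilon/\delta^s\to G_s(x,\cdot)/\delta^s$ uniformly on $\partial\Omega$, which is more than enough to pass to the limit in the quadratic boundary term. You should spell this argument out rather than leave it as a pointer to \cite{DjitteSueur}.
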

The identity \eqref{PohozaevGreen}  generalizes to the fractional setting the classical Pohozaev-type identity for the Green function \eqref{repres-class-Robin} obtained  by Brezis and Peletier in the case $s=1$.  Our result complements the one in \cite{DjitteSueur}, where among other things,  a representation formula for the \emph{gradient} of the fractional Robin function $\mathcal R_s$ has been derived (see \cite[Theorem 1.4]{DjitteSueur}), extending analogous classical results to the fractional case.\\\\
Properties of Green and Robin functions are extremely useful in PDEs. The classical identity \eqref{repres-class-Robin} plays an important role, for instance, in the study of  concentration phenomena for elliptic boundary value problems involving the critical Sobolev exponent (see \cite{BrezisPeletier, Han, Grossi}). Identity \eqref{PohozaevGreen} could hence be applied to get sharp asymptotic characterizations of solutions in the fractional setting, both improving existing results in the literature (like the ones in \cite{Baktha_Muk_Santra}, for instance), and to get new sharp results (see the forthcoming paper \cite{DeMarchisIanniSaldana}). \\\\
Theorem \ref{MainTheorem} is obtained as a corollary of the  identity \eqref{bilinearPohozaevGreen} contained in the next result:
\begin{theorem}
\label{MainTheoremBilinear}
Let $N\in\mathbb N$, $s\in (0,1)$, $N>2s$ and let $\Omega$ be a bounded open set of $\mathbb R^N$ of class $C^{1,1}$. Let $x,y\in\Omega$ with $x\neq y$. Then 
\begin{equation}\label{bilinearPohozaevGreen}\Gamma^2(1+s)\int_{\partial\Omega}\frac{ G_s(x,\cdot)}{\delta^s}\frac{G_s(y,\cdot)}{\delta^s} \langle \cdot-x,\nu\rangle \, d\sigma=(N-2s) H_s(x,y) +   \langle \nabla_y  H_s(x,y),y-x\rangle,
\end{equation}
where $\nu$ is the outward unit normal to $\partial\Omega$ and $\delta:=dist (\ \cdot\ , \mathbb R^N\setminus\Omega)$. \\
Furthermore, if $s>\frac{1}{2}$, then  for every $\xi\in\mathbb R^N$ there holds
\begin{equation}\label{bilinearPohozaevGreenGENERAL}\Gamma^2(1+s)\int_{\partial\Omega}\frac{ G_s(x,\cdot)}{\delta^s}\frac{G_s(y,\cdot)}{\delta^s} \langle \cdot-\xi,\nu\rangle \, d\sigma=(N-2s) H_s(x,y)
 +  \langle \nabla_x  H_s(y,x),x-\xi\rangle +  \langle \nabla_y  H_s(x,y),y-\xi\rangle.
\end{equation}
\end{theorem}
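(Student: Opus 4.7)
The strategy is to apply a bilinear extension of the Ros-Oton--Serra fractional Pohozaev identity to the pair $u=G_s(x,\cdot)$ and $v=G_s(y,\cdot)$. In its bilinear form, for $u,v$ vanishing outside $\Omega$ with sufficient regularity and for $\xi\in\R^N$, such an identity reads
\[
\int_\Omega \langle z-\xi,\nabla u(z)\rangle (-\Delta)^s v(z)\,dz + \int_\Omega \langle z-\xi,\nabla v(z)\rangle (-\Delta)^s u(z)\,dz + (N-2s)\int_\Omega u(-\Delta)^s v\,dz = -\Gamma^2(1+s)\int_{\partial\Omega}\frac{u}{\delta^s}\frac{v}{\delta^s}\langle z-\xi,\nu\rangle\,d\sigma,
\]
and can be obtained from the scalar Ros-Oton--Serra identity by polarisation (applying it to $u$, $v$ and $u+v$ and subtracting, using the symmetric bilinear form $\int u(-\Delta)^s v\,dz = \int v(-\Delta)^s u\,dz$ that is available for functions vanishing outside $\Omega$). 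When $\xi=x$ the identity holds for all $s\in(0,1)$, while arbitrary $\xi$ requires $s>1/2$, reflecting the corresponding threshold in the underlying translation identity for $(-\Delta)^s$; this explains the restriction in \eqref{bilinearPohozaevGreenGENERAL}.

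Direct substitution of $u=G_s(x,\cdot)$ and $v=G_s(y,\cdot)$ is not allowed, since $(-\Delta)^s G_s(x,\cdot)=\delta_x$ is distributional and $G_s(x,z)$ carries an $|z-x|^{2s-N}$ singularity at $z=x$. I would therefore regularise: pick smooth mollifier sequences $\rho_n^x\to\delta_x$ and $\rho_n^y\to\delta_y$ with supports shrinking around $x$ and $y\in\Omega$ (recall $x\neq y$), and let $u_n,v_n$ solve $(-\Delta)^s u_n=\rho_n^x$ and $(-\Delta)^s v_n=\rho_n^y$ with zero exterior data. These approximants are smooth enough for the bilinear identity above to apply. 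Passing to the limit $n\to\infty$, the concentration of the mollifiers at $x,y$ combined with the local smoothness of $G_s$ off the diagonal converts the three volume integrals on the left into
\[
\langle y-\xi,\nabla_y G_s(x,y)\rangle + \langle x-\xi,\nabla_x G_s(y,x)\rangle + (N-2s)\,G_s(x,y);
\]
the boundary integral passes to the limit because $u_n/\delta^s\to G_s(x,\cdot)/\delta^s$ and $v_n/\delta^s\to G_s(y,\cdot)/\delta^s$ uniformly on $\partial\Omega$, by the sharp boundary regularity of the fractional Dirichlet problem applied to right-hand sides whose supports remain uniformly away from $\partial\Omega$.

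To conclude, I would substitute $G_s=F_s-H_s$ with the explicit fundamental solution $F_s(x,y)=b_{N,s}|x-y|^{2s-N}$ and exploit the algebraic cancellation
\[
\langle x-\xi,\nabla_x F_s(y,x)\rangle + \langle y-\xi,\nabla_y F_s(x,y)\rangle = -(N-2s)\,F_s(x,y),
\]
which is $\xi$-independent because $\nabla_x F_s(y,x)=-\nabla_y F_s(x,y)$, so that the $\xi$-contributions telescope. Combined with the $(N-2s)F_s(x,y)$ coming from the third volume term, the singular $F_s$-parts cancel completely, and the right-hand side of \eqref{bilinearPohozaevGreenGENERAL} emerges; the identity \eqref{bilinearPohozaevGreen} is then the specialisation $\xi=x$. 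The main obstacle I anticipate is the rigorous limit $n\to\infty$, in particular the uniform convergence $u_n/\delta^s\to G_s(x,\cdot)/\delta^s$ on $\partial\Omega$ needed for the boundary term; this requires the sharp boundary regularity theory of Ros-Oton--Serra for the Dirichlet problem, together with the fact that the mollifier supports eventually lie at a positive distance from $\partial\Omega$.
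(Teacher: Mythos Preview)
Your overall strategy is sound, and the algebraic endgame---splitting $G_s=F_s-H_s$ and using $\nabla_x F_s(y,x)=-\nabla_y F_s(x,y)$ to cancel the singular parts---is exactly what the paper does. The approximation scheme, however, is different: the paper does \emph{not} solve $(-\Delta)^s u_n=\rho_n^x$ with a mollified Dirac in the fractional case, but instead multiplies $G_s(x,\cdot)$ by cut-offs $\eta_k\psi_{\mu,x}$ that vanish near $\partial\Omega$ and near $x$, and then takes successive limits $k\to\infty$, $\mu\to 0^+$, $\gamma\to 0^+$, importing a battery of convergence lemmas from Djitte--Fall--Weth and Djitte--Sueur for each term. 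Your mollification scheme is precisely what the paper uses in the \emph{local} case $s=1$ (their Theorem~\ref{Main_local_TheoremBilinear_s=1}); carrying it over to the fractional setting is a natural idea they do not pursue.

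One genuine confusion in your proposal: you attribute the $s>\tfrac12$ threshold to the bilinear Pohozaev identity itself (``the underlying translation identity for $(-\Delta)^s$''). That is not where it comes from. The identity \eqref{eq:pohozaev.1} holds for sufficiently regular $u,v$ for every $s\in(0,1)$ and every $\xi\in\R^N$; there is no threshold at the level of smooth functions. In the paper the restriction appears only in the \emph{limit} (their Step~3), because the cut-off scheme forces them to integrate $\langle z-\xi,\nabla_z G_s(x,z)\rangle$ over all of $\Omega$, and this lies in $L^1(\Omega)$ for general $\xi$ only when $s>\tfrac12$, while the choice $\xi=x$ restores integrability for every $s$ (see Remark~\ref{remark:IntegrabilityNablaG}). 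In your mollification approach the analogous terms $\int\langle z-\xi,\nabla u_n\rangle\rho_n^y\,dz$ are supported near $y\neq x$, so this global integrability obstacle never arises; if you can make the boundary-term convergence $u_n/\delta^s\to G_s(x,\cdot)/\delta^s$ rigorous (this needs continuity of $w\mapsto (G_s(\cdot,w)/\delta^s)|_{\partial\Omega}$ for $w$ in compact subsets of $\Omega$, not merely the scalar Ros-Oton--Serra estimate for a fixed right-hand side), your route would in fact yield \eqref{bilinearPohozaevGreenGENERAL} for the full range $s\in(0,1)$.
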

Let us observe that for $s>\frac{1}{2}$ one could derive \eqref{bilinearPohozaevGreen} as a special case of \eqref{bilinearPohozaevGreenGENERAL} taking $\xi=x$. However,  differently from \eqref{bilinearPohozaevGreenGENERAL},  \eqref{bilinearPohozaevGreen} is obtained in the full range $s\in (0,1)$.\\ 

The proof of Theorem \ref{MainTheoremBilinear}  starts from the following generalized fractional integration-by-parts formula, which has been derived in \cite{DjitteFallWeth}, for suitably smooth functions $u,v$: 
\begin{align} \label{poho_DFW}
\int_\Omega\langle \mathcal X,\nabla u\rangle(-\Delta)^{s}v\,dx+\int_\Omega \langle \mathcal X,\nabla v\rangle(-\Delta)^{s}u\,dx=-\mathcal E_{\mathcal X}(u,v)-\Gamma(1+s)^{2}\int_{\partial\Omega}\frac{u}{\delta^{s}}\frac{v}{\delta^{s}}\langle \mathcal X,\nu\rangle\,d\sigma,
\end{align}
where  $\mathcal X$ is a given Lipschitz vector field on $\mathbb R^N$, and
\[
\mathcal E_{\mathcal X}(v,u):=\iint_{\R^N\times\R^N}\big(v(z)-v(t)\big)\big(u(z)-u(t)\big)K_\mathcal X(z,t)\,dzdt,
\]
with kernel 
\[K_{\mathcal X}(z,t):= \frac{c_{N,s}}{2}\Bigl[\bigl( div\, \mathcal X(z) + div\, \mathcal X(t)\bigr)- (N+2s)\frac{ \bigl(\mathcal X(z)-\mathcal X(t)\bigr)\cdot (z-t)}{|z-t|^2}\Bigr]\frac{1}{|z-t|^{N+2s}} .
\]
We fix $\xi\in \Omega$, and  choose the vector field $\mathcal X =id -\xi$. Hence  we
apply \eqref{poho_DFW} to the functions $G_s(x,\cdot)$ and $G_s(y,\cdot)$. Since the functions $G_s(x,\cdot)$ and $G_s(y,\cdot)$ are too irregular to be admissible in \eqref{poho_DFW},  we adapt the  arguments developed  in \cite{DjitteSueur} to approximate them by  $C^\infty$-functions of the form $\eta_k \psi_{\mu,x}G_s(x,\cdot)$ and $\eta_k \psi_{\gamma,y}G_s(y,\cdot)$, where $\eta_k$ and $\psi_{\mu,x}, \psi_{\gamma,y}$ are suitable cut-off functions which vanish near the boundary $\partial\Omega$, and near the singular points $x$ and $y$ of the Green functions, respectively. Then, we pass to the limit as $k\to+\infty$, $\mu\to 0^+$ and $\gamma\rightarrow 0^+$, adapting results from \cite{DjitteSueur, DjitteFallWethHadamard}, and thus we get \eqref{bilinearPohozaevGreenGENERAL}. The assumption $s>\frac{1}{2}$  guarantees the integrability of the function $z\mapsto \langle z-\xi,\nabla_zG_s(x,z)\rangle$, which is needed in the passages to the limit (see \emph{Step 3} in the proof of Theorem \ref{MainTheoremBilinear}). When $s \in (0,\frac{1}{2}]$, this integrability is recovered by choosing $\xi=x$ (see Remark \ref{remark:IntegrabilityNablaG}), this explains why we get  \eqref{bilinearPohozaevGreen} in this case.\\

To the best of our knowledge, there is currently no known counterpart of \eqref{bilinearPohozaevGreenGENERAL} in the local case 
$s=1$. We also establish such an analogous identity in the local setting.
\begin{theorem}
\label{Main_local_TheoremBilinear_s=1}
Let $N\in\mathbb N$, $N > 2$, and let $\Omega$ be a bounded open set of $\mathbb R^N$ of class $C^{1,1}$. Let $x,y\in\Omega$ with $x\neq y$. Then for every $\xi\in\mathbb R^N$ there holds
\begin{equation}\label{bilinearPohozaevGreenGENERAL_s=1}
\displaystyle\int_{\partial\Omega}\frac{ \partial G_1(x,\cdot)}{\partial\nu}\frac{\partial G_1(y,\cdot)}{\partial\nu} \langle \cdot-\xi,\nu\rangle \, d\sigma=(N-2) H_1(x,y)  +  \langle \nabla_x  H_1(y,x),x-\xi\rangle 
+  \langle \nabla_y  H_1(x,y),y-\xi\rangle,
\end{equation}
where $G_1$ and $H_1$ are the Green function and its regular part for the standard Laplacian in $\Omega$, respectively.
\end{theorem}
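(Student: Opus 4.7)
The plan is to mirror the strategy of Theorem~\ref{MainTheoremBilinear} in the local setting, using the classical bilinear Pohozaev identity in place of \eqref{poho_DFW}. For any $C^2$ functions $u,v$ on a bounded $C^1$ domain $\Omega'\subset\mathbb R^N$ and any Lipschitz vector field $\mathcal X$ on $\mathbb R^N$, two successive integrations by parts yield
\begin{align*}
\int_{\Omega'}[(\mathcal X\!\cdot\!\nabla u)(-\Delta v)+(\mathcal X\!\cdot\!\nabla v)(-\Delta u)]\,dz
={}&\int_{\Omega'}(\partial_j\mathcal X^i+\partial_i\mathcal X^j-\delta_{ij}\,\mathrm{div}\,\mathcal X)\,\partial_iu\,\partial_jv\,dz\\
&+\int_{\partial\Omega'}[\langle\mathcal X,\nu\rangle\,\nabla u\!\cdot\!\nabla v-(\mathcal X\!\cdot\!\nabla u)\partial_\nu v-(\mathcal X\!\cdot\!\nabla v)\partial_\nu u]\,d\sigma.
\end{align*}
Specializing $\mathcal X(z)=z-\xi$ reduces the bulk coefficient to $(2-N)\delta_{ij}$, so the bulk integrand becomes $(2-N)\nabla u\cdot\nabla v$.

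Since $u:=G_1(x,\cdot)$ and $v:=G_1(y,\cdot)$ are singular at $x$ and $y$, I would apply the identity on the truncated domain $\Omega_\eps:=\Omega\setminus(\overline{B_\eps(x)}\cup\overline{B_\eps(y)})$ with $\eps>0$ small enough that the two balls are disjoint and compactly contained in $\Omega$. On $\Omega_\eps$ both functions are smooth and harmonic, so the left-hand side of the identity vanishes and it reduces to
\[
0=(2-N)\int_{\Omega_\eps}\!\nabla u\cdot\nabla v\,dz+\mathcal B_{\partial\Omega}+\mathcal B_x^\eps+\mathcal B_y^\eps,
\]
where $\mathcal B_{\partial\Omega}$, $\mathcal B_x^\eps$, $\mathcal B_y^\eps$ are the boundary contributions from $\partial\Omega$ and from the small spheres (with outward normal to $\Omega_\eps$ pointing toward $x$ and $y$ respectively).

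The main step is passing to the limit $\eps\to 0^+$. Since $u=v=0$ on $\partial\Omega$, one has $\nabla u=\partial_\nu u\,\nu$ and $\nabla v=\partial_\nu v\,\nu$ there, so $\mathcal B_{\partial\Omega}$ is $\eps$-independent and equals $-\int_{\partial\Omega}\langle\cdot-\xi,\nu\rangle\,\partial_\nu G_1(x,\cdot)\,\partial_\nu G_1(y,\cdot)\,d\sigma$. Applying Green's formula to $u$ on $\Omega\setminus B_\eps(x)$, together with $-\Delta u=\delta_x$ and $v|_{\partial\Omega}=0$, gives $\int_{\Omega_\eps}\nabla u\cdot\nabla v\,dz\to G_1(x,y)$. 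To handle $\mathcal B_x^\eps$ I would substitute the local decomposition $u(z)=c_N|z-x|^{-(N-2)}-H_1(x,z)$, where $c_N=[(N-2)\omega_{N-1}]^{-1}$ is the Newtonian normalization, parametrize $z=x+\eps\theta$ with $\theta\in S^{N-1}$, and exploit the spherical-average identities $\int_{S^{N-1}}\theta\,d\omega=0$ and $\int_{S^{N-1}}\theta_i\theta_j\,d\omega=(\omega_{N-1}/N)\delta_{ij}$. The divergent contributions (of orders $\eps^{-(N-2)}$ and $\eps^{-(N-3)}$) cancel across the three integrands of $\mathcal B_x^\eps$, leaving $\mathcal B_x^\eps\to-\langle x-\xi,\nabla_zG_1(y,z)|_{z=x}\rangle$; the symmetric computation produces $\mathcal B_y^\eps\to-\langle y-\xi,\nabla_zG_1(x,z)|_{z=y}\rangle$.

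Assembling these limits yields an identity for the boundary integral in terms of $G_1(x,y)$ and its gradient at the singular points. To conclude I would insert $G_1=F_1-H_1$ with $F_1(x,y)=c_N|x-y|^{-(N-2)}$ and use the symmetry $H_1(x,y)=H_1(y,x)$. A short vector computation based on the identity $c_N\langle y-x,y-x\rangle/|x-y|^N=F_1(x,y)/(N-2)$ shows that all $F_1$-contributions telescope to zero, while the $H_1$-contributions reassemble into the right-hand side of \eqref{bilinearPohozaevGreenGENERAL_s=1}. The main obstacle is the careful tracking of singular orders in $\mathcal B_x^\eps$ and $\mathcal B_y^\eps$; however, since in the local case $G_1$ has an elementary Newtonian singularity with a smooth remainder, these limits follow from direct spherical-average computations, bypassing the delicate cut-off machinery from \cite{DjitteSueur,DjitteFallWethHadamard} needed for the fractional analogue.
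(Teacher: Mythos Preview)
Your argument is correct, but it follows a genuinely different route from the paper's. The paper regularizes by replacing $G_1(a,\cdot)$ with the smooth solutions $u_{\rho,a}$ of $-\Delta u_{\rho,a}=|B_\rho(a)|^{-1}\chi_{B_\rho(a)}$ in $\Omega$ with zero boundary data (the Brezis--Peletier approximation), applies the Pohozaev identity \eqref{eq:pohozaevLocal} on all of $\Omega$, and then passes to the limit $\rho\to 0^+$. The key observation there is that, introducing the explicit free-space solution $v_{\rho,a}$, one has the \emph{exact} identity $u_{\rho,a}-v_{\rho,a}=-H_1(a,\cdot)$ for small $\rho$; this isolates the $H_1$-contributions directly, while the remaining $v_\rho$-terms are computed explicitly and shown to cancel. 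By contrast, you excise the singularities, work on $\Omega\setminus(\overline{B_\eps(x)}\cup\overline{B_\eps(y)})$ with the genuine Green functions, and extract the limits of the sphere contributions $\mathcal B_x^\eps,\mathcal B_y^\eps$ via the Newtonian expansion and spherical averages, postponing the $G_1=F_1-H_1$ substitution to the very end. Your approach is more elementary in that it avoids auxiliary PDE solutions, at the cost of a careful order-by-order cancellation on the small spheres; the paper's approach trades that computation for the clean algebraic identity $u_{\rho,a}-v_{\rho,a}=-H_1(a,\cdot)$, which makes the emergence of $H_1$ and the vanishing of the singular part essentially immediate.
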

The proof of Theorem \ref{Main_local_TheoremBilinear_s=1} combines the previous strategy with some ideas from \cite{BrezisPeletier}. Indeed, it consists again in applying the integration-by-part formula, now the classical one, to suitable smooth approximations of  the classical Green function, but now   we can approximate  $G_1(x,\cdot)$ and $G_1(y,\cdot)$ with the smooth functions $u_{\rho,x}$ and $u_{\rho,y}$, which solve
\[
\left\{ 
\begin{array}{rcll} -\Delta u_{\rho,a}&=& \frac{1}{|B_{\rho}(a)|} \chi_{B_\rho(a)}  &\textrm{in }\Omega \\ u_{\rho,a}&=& 0&\textrm{on }\partial\Omega, \end{array}
\right.
\]
with $a=x$ and $a=y$, respectively, where $B_\rho(a)$ is the ball of radius $\rho>0$ centered at $a$, and $\chi$ denotes the characteristic function.  By this choice,  it follows that the fundamental solutions $F_1(x,\cdot)$ and $F_1(y,\cdot)$  are approximated by a function which can be computed explicitly. This easily allows to conclude, passing to the limit as $\rho\rightarrow 0^+$.  We stress that we can chose now $\rho$ to be the same in both the approximations. We also remark that, having $x\neq y$, simplifies the arguments if compared to the ones in \cite{BrezisPeletier}.

\section{Preliminaries}
Let $\delta_\Omega$ be the signed distance function $\delta_\Omega(z):=dist(z,\mathbb R^N\setminus\Omega)-dist(z,\Omega)$, and let us fix $\delta\in C^{1,1}(\R^N)$  such that it coincides with $\delta_\Omega$ near the boundary of $\Omega$, it is positive in $\Omega$ and negative in $\mathbb R^N\setminus\Omega$.\\
For $\eps >0$ we define the sets
\begin{equation}\label{eq:defSetOmegaeps}
\Omega_\eps:= \{z \in \R^N\::\: |\delta(z)|< \eps\} \qquad \text{and}\qquad 
\Omega_\eps^+:= \{z \in \R^N\::\: 0< \delta(z)< \eps\}= \{z \in \Omega\::\: \delta(z)< \eps\}.
\end{equation}
We also define $\Psi: \partial \Omega \times (-\eps, \eps) \to \Omega_{\eps}$ as the map 
\begin{equation}\label{def:localCoordBoundary}
\Psi(\sigma,r):= \sigma - r \nu(\sigma),
\end{equation}
where $\nu: \partial \Omega \to \R^N$ is the outward unit  normal vector field. Since  $\partial \Omega\in C^{1,1}$ by assumption, the map $\Psi$ is Lipschitz, moreover for  $\eps>0$ sufficiently small  it is  bi-Lipschitz.  In particular, $\Psi$ is a.e. differentiable, with $\Psi(\partial \Omega \times (0, \eps))= \Omega_{\eps}^+$,
$
\delta(\Psi(\sigma,r)) = r $, for $\sigma \in \partial \Omega,\: 0 \le r < \eps$,
Furthermore
  \[
 \|{\rm Jac}_{{\Psi}}\|_{L^\infty(\Omega_\eps)}< +\infty \quad \text{and}\quad {\rm Jac}_{{\Psi}}(\sigma,0)=1 \quad \text{for a.e. $\sigma \in \partial \Omega$.}
   \]
Following \cite{DjitteSueur,DjitteFallWethHadamard}, we define suitable cut-off functions, which vanish near the boundary $\partial \Omega$, and near a fixed point $x\in \Omega$, respectively.
Let $\rho\in C^\infty_c(-2,2)$, such that $0\leq \rho\leq 1$, and $\rho=1$ in $(-1,1)$.
For any $k\in \mathbb N^*$, we define $\eta_k: \mathbb R^N\to \mathbb R$
\begin{equation}\label{eta-k}
    \eta_k(z):=1-\rho(k\delta(z)).
\end{equation}
Furthermore, for $x\in \Omega$ fixed, and for any $\mu\in(0,1)$, we also define  $\psi_{\mu,x}:\R^N\to \mathbb R$ by 
\begin{equation}\label{psi-mu}
 \psi_{\mu,x}(z):=1-\rho\Big(\frac{8}{\delta^2_\Omega(x)}\frac{|x-z|^2}{\mu^2}\Big).
\end{equation}
In the rest of the section we collect convergence properties for integral functionals, involving both these cut-off functions and the Green functions $G_s(x,\cdot)$ and $G_s(y,\cdot)$, as $k\rightarrow +\infty$ or $\mu\rightarrow 0^+$. 
 Lemma \ref{lemma:similarFHFA} is new, its proof is inspired by arguments in \cite{DjitteFallWethHadamard}, and relies on estimates already obtained there. Lemma \ref{lemma:lm_2.4_DS_revisitedFIRST_PART} and Lemma \ref{lemma:lm_2.4_DS_revisited} are slight modifications of a previous results in \cite{DjitteSueur}. Lemma \ref{lemma_2.3_Djitte_Sueur_corrected},  \ref{lemma:lm2.2_DS_revisited} and \ref{psi_lm} are instead taken from  \cite{DjitteSueur,DjitteFallWethHadamard},  we repeat them here for the reader's convenience.\\\\
Let us recall the fractional product rule: 
 \begin{equation}\label{pl}
(-\Delta)^s (uv) =  v(-\Delta)^s u + u (-\Delta)^s v  -\mathcal I_s [u,v],
\end{equation}
where
\begin{align}\label{def-I-s}
\mathcal I_s [u,v](\cdot) :=  c_{N,s} \, \pv\int_{\R^N} \frac{\big(u(\cdot)-u(t)\big) \big(v(\cdot)-v(t)\big)}{|\cdot-\,t\,|^{N+2s}}dt .
\end{align}
From \cite[Proposition 2.4]{DjitteFallWethHadamard} (see also Remark 2.5) applied to $u=v+w$ and $u=v-w$ we get
\begin{lemma}
  \label{lemma_2.3_Djitte_Sueur_corrected}
Let $v,w\in C^s(\mathbb R^N) \cap C^{1}_{loc}(\Omega)$, $v\equiv w\equiv 0$ in $\mathbb R^N\setminus\Omega$, satisfying, for some $\alpha\in (0,1)$, the following regularity properties:
\[
\frac{v}{\delta^s_{\Omega}}, \frac{w}{\delta^s_{\Omega}} \in C^{\alpha}(\overline\Omega) \qquad \text{and}\qquad 
 \delta_{\Omega}^{1-\alpha} \nabla \frac{v}{\delta^s_{\Omega}},\, \delta_{\Omega}^{1-\alpha} \nabla \frac{w}{\delta^s_{\Omega}} \quad \text{are bounded in a neighborhood of $\partial \Omega$.}
\]
Then for all $\mathcal X \in C^{0,1}\left(\mathbb{R}^{N}, \mathbb{R}^{N}\right)$, we have
\begin{equation}
  \label{eq:limit-identity}
\lim_{k\to +\infty}\Bigg(
\int_{\Omega}\langle\nabla (\eta_kw), \mathcal X\rangle \Bigl( v   (-\Delta)^s \eta_k  -\mathcal I_s[v,\eta_k]\Bigr)+\int_{\Omega}\langle\nabla (\eta_kv), \mathcal X\rangle \Bigl( w(-\Delta)^s \eta_k  -\mathcal I_s[w,\eta_k]\Bigr)\,dz\Bigg)
= \Gamma^2(1+s)  \int_{\partial\Omega} \frac{w}{\delta^s_{\Omega}}\frac{v}{\delta^s_{\Omega}}\, \langle \mathcal X, \nu\rangle \, d\sigma.
\end{equation}
where $\nu$ is the outward unit normal to $\partial\Omega$.
\end{lemma}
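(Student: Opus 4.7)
The identity to be proved is the polarized form of the quadratic limit identity \cite[Proposition 2.4]{DjitteFallWethHadamard}, and the plan is precisely the one signalled in the sentence introducing the statement: apply that quadratic identity twice, once to $u=v+w$ and once to $u=v-w$, and combine.

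Step one is to verify that the regularity hypotheses in the statement are linear in the underlying function, so that both $v+w$ and $v-w$ are admissible inputs for Proposition 2.4 as soon as $v$ and $w$ are. Membership in $C^s(\R^N)\cap C^{1}_{\mathrm{loc}}(\Omega)$, vanishing outside $\Omega$, $C^\alpha$ regularity of $\,\cdot/\delta_\Omega^s$ on $\overline\Omega$, and boundedness of $\delta_\Omega^{1-\alpha}\nabla(\cdot/\delta_\Omega^s)$ near $\partial\Omega$ are all closed under sums and differences, so this step is immediate.

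Step two is the polarization itself. Introduce the auxiliary bilinear form
\[
B_k(u_1,u_2) \;:=\; \int_\Omega \langle \nabla(\eta_k u_1),\mathcal X\rangle \Bigl(u_2(-\Delta)^s\eta_k - \mathcal I_s[u_2,\eta_k]\Bigr)\,dz,
\]
which at fixed $k$ is bilinear in $(u_1,u_2)$. Proposition 2.4 reads $\lim_{k\to\infty} B_k(u,u) = \frac{\Gamma^2(1+s)}{2}\int_{\partial\Omega}(u/\delta^s_\Omega)^2\langle\mathcal X,\nu\rangle\,d\sigma$ for admissible $u$. Subtracting the identities obtained for $u=v+w$ and $u=v-w$ and using that, by bilinearity, $B_k(v+w,v+w)-B_k(v-w,v-w)=2\bigl(B_k(v,w)+B_k(w,v)\bigr)$, the right-hand side (in the limit $k\to\infty$) becomes $\frac{\Gamma^2(1+s)}{2}\int_{\partial\Omega}\frac{(v+w)^2-(v-w)^2}{\delta_\Omega^{2s}}\langle\mathcal X,\nu\rangle\,d\sigma = 2\Gamma^2(1+s)\int_{\partial\Omega}\frac{vw}{\delta_\Omega^{2s}}\langle\mathcal X,\nu\rangle\,d\sigma$ by the elementary identity $(v+w)^2-(v-w)^2=4vw$. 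Dividing by $2$ and observing that $B_k(v,w)+B_k(w,v)$ is exactly the integrand inside the limit in \eqref{eq:limit-identity} gives the claim.

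There is no serious obstacle here: once the quadratic Proposition 2.4 is granted, the derivation is pure polarization together with a one-line linearity check on the hypotheses. The only point requiring some care is the bookkeeping of the constant: the factor $1/2$ in the single-function version and the factor $4$ coming from $(v+w)^2-(v-w)^2$ are exactly what produce the coefficient $\Gamma^2(1+s)$ on the right-hand side of \eqref{eq:limit-identity}.
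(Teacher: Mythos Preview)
Your proposal is correct and is exactly the approach the paper takes: the paper's entire proof of this lemma is the single sentence ``From \cite[Proposition 2.4]{DjitteFallWethHadamard} (see also Remark 2.5) applied to $u=v+w$ and $u=v-w$ we get [the lemma]'', and you have simply spelled out the polarization algebra that this sentence encodes. The only point you might add for completeness is the sign bookkeeping flagged in the paper's Remark following the lemma: in \cite{DjitteFallWethHadamard} the normal is inward, so Proposition~2.4 there carries the opposite sign on the boundary term, which becomes the sign displayed here once $\nu$ is taken outward.
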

\begin{remark}
    We stress that here, differently from \cite{DjitteFallWethHadamard}, $\nu$ is the outward unit normal to $\partial\Omega$, this explains the different sign on the right hand side of \eqref{eq:limit-identity}.
\end{remark}

\begin{lemma} \label{lemma:similarFHFA} Let $v\in C^s(\mathbb R^N)$, $v\equiv 0$ in $\R^N\setminus\Omega$, satisfying  
$\frac{v}{\delta^s_{\Omega}}\in C^{\alpha}(\overline\Omega)$ for some $\alpha\in (0,1)$.
Let $w\in C(\overline\Omega)$. Then
   \[ \lim_{k\rightarrow +\infty}\int_{\Omega} w \eta_k\Bigl( v(-\Delta)^s \eta_k  -\mathcal I_s[v,\eta_k]\Bigr)dz =0\]
\end{lemma}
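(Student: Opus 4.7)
I begin by observing that the integrand on the left-hand side simplifies to a single (principal-value) integral: by the definitions of $(-\Delta)^s\eta_k$ and $\mathcal{I}_s[v,\eta_k]$, the terms involving $v(z)$ cancel and one obtains
\[
R_k(z):=v(z)(-\Delta)^s\eta_k(z)-\mathcal{I}_s[v,\eta_k](z)=c_{N,s}\,\pv\int_{\R^N}\frac{v(t)\bigl(\eta_k(z)-\eta_k(t)\bigr)}{|z-t|^{N+2s}}\,dt.
\]
Since $v\equiv 0$ outside $\Omega$ and $\eta_k(z)-\eta_k(t)=0$ whenever both $z$ and $t$ lie outside the thin tube $\{|\delta|<2/k\}$, the integrand is concentrated near $\partial\Omega$; combined with the bound $|v(t)|\le C\delta(t)^s$ coming from $v/\delta_\Omega^s\in L^\infty(\overline\Omega)$, this should make $R_k$ small in a suitable sense, at least once we multiply by $w\eta_k$, which itself vanishes in $\{|\delta|<1/k\}$.

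The plan is to conclude by dominated convergence. For the pointwise convergence $R_k(z)\to 0$ at every $z\in\Omega$: fix $z$ with $\delta(z)>0$; for $k$ so large that $2/k<\delta(z)/2$, $\eta_k(z)=1$ and the integrand is supported in $\Omega_{2/k}^+$, a set lying at distance $\geq\delta(z)/2$ from $z$, so estimating by $|v(t)|\leq Ck^{-s}$ and $|\Omega_{2/k}^+|\leq Ck^{-1}$ one gets $|R_k(z)|\leq C_{\delta(z)}k^{-s-1}\to 0$. For the dominating function, I would adapt the tubular-neighbourhood estimates appearing in the proof of Proposition 2.4 of \cite{DjitteFallWethHadamard} to obtain a uniform-in-$k$ pointwise bound of the form $|R_k(z)|\leq C\delta(z)^{-s}$ on $\Omega$; since $s<1$, the envelope $\delta^{-s}$ is integrable on $\Omega$, and multiplying by $|w\eta_k|\leq\|w\|_{L^\infty(\overline\Omega)}$ yields an integrable dominating function. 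Dominated convergence then gives the claimed limit.

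The main obstacle is the uniform bound $|R_k(z)|\leq C\delta(z)^{-s}$ on the boundary-layer set $\{1/k\leq\delta\leq 4/k\}$, where the principal-value singularity of $R_k$ is genuinely present and where $\delta(z)^{-s}$ is comparable to $k^s$. Obtaining it requires splitting the $t$-integration into the dyadic regions $|t-z|\lesssim\delta(z)$, $\delta(z)\lesssim|t-z|\lesssim 1/k$, $|t-z|\gtrsim 1/k$, and exploiting on the innermost region the first-order cancellation of $(z-t)/|z-t|^{N+2s}$ over balls (essential when $s\geq 1/2$) together with the H\"older regularity of $v/\delta_\Omega^s$. These are precisely the technical ingredients already developed in \cite{DjitteFallWethHadamard}, and it is in order to be able to invoke them that we assume $v/\delta_\Omega^s\in C^\alpha(\overline\Omega)$.
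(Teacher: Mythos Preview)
Your proposal is correct and would yield a valid proof, but it is organized differently from the paper's argument. The paper does not attempt a single dominated-convergence step on all of $\Omega$; instead it fixes $\eps>0$, splits $\int_\Omega q_k = \int_{\Omega\setminus\Omega_\eps} q_k + \int_{\Omega_\eps^+} q_k$, and treats the two pieces separately: on the interior piece it shows pointwise convergence to $0$ together with a uniform $L^\infty$ bound and applies DCT, while on the boundary collar $\Omega_\eps^+$ it passes to tubular coordinates $(\sigma,r)\mapsto \sigma-(r/k)\nu(\sigma)$ and invokes the pointwise estimates from \cite{DjitteFallWethHadamard} (their Corollary~6.4 and Lemma~6.8) to obtain the explicit rate $\bigl|\int_{\Omega_\eps^+}q_k\bigr|\le C\,k^{-(1-s)}\to 0$.

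Your route packages the same technical input differently: you aim for a uniform pointwise envelope $|R_k(z)|\le C\delta(z)^{-s}$ and then apply DCT once, using that $\delta^{-s}\in L^1(\Omega)$ for $s<1$. This envelope does indeed follow from the very estimates the paper quotes: writing $r=k\delta(z)$, those estimates give $|v(-\Delta)^s\eta_k|+|\mathcal I_s[v,\eta_k]|\le Ck^s\bigl(\tfrac{r^s}{1+r^{1+2s}}+\tfrac{1}{1+r^{1+s}}\bigr)\le C' k^s r^{-s}=C'\delta(z)^{-s}$ on the collar, and the interior bound is uniform. So the content is equivalent; what your approach buys is conceptual economy (one DCT instead of a splitting plus a rate computation), while the paper's approach yields an explicit decay rate $k^{-(1-s)}$ for the boundary contribution and avoids having to repackage the cited lemmas into a pointwise envelope. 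One small remark: in your sketch of the dyadic $t$-decomposition, the middle shell $\delta(z)\lesssim|t-z|\lesssim 1/k$ is essentially empty on the set $\{\delta(z)\ge 1/k\}$ where $\eta_k\neq 0$, so in practice the relevant regions collapse to just the near ($|t-z|\lesssim\delta(z)$) and far ($|t-z|\gtrsim\delta(z)$) zones; this does not affect the validity of your plan.
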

\begin{proof}
    Taking inspiration from the proof of \cite[Proposition 2.4]{DjitteFallWethHadamard}, for $k \in \N$, we define $q_k:\Omega \to \R$
\begin{equation}
  \label{eq:def-ge-k}
q_k:= w  \eta_k\Bigl( v   (-\Delta)^s \eta_k  -\mathcal I_s[v,\eta_k]\Bigr),
\end{equation}
and, for $\eps>0$,  
we split the integral as
\begin{equation}\label{splitInt}\int_{\Omega} q_k\,dz = \int_{\Omega\setminus \Omega^\eps} q_k\,dz +  \int_{\Omega^{\eps}_+} q_k\,dz, \end{equation}
where $\Omega_\eps$ and $\Omega_\eps^+$ are the sets defined in \eqref{eq:defSetOmegaeps}.
We show that, for every $\eps>0$, we  have 
\begin{equation}
  \label{eq:reduction-to-eps-collar-prelim-1}
\lim_{k\to +\infty}\int_{\Omega \setminus \Omega_\eps} q_k\,dz =0,
\end{equation}
and that there exists $\eps'>0$ such that for any $\eps\in (0,\eps')$, we also have
\begin{equation}
\label{eq:SecondtermgoestoZERO}
\lim_{k\to +\infty}\int_{\Omega_\eps^+} q_k\,dz =0.
\end{equation}
The conclusion than follows from \eqref{eq:SecondtermgoestoZERO},  \eqref{eq:reduction-to-eps-collar-prelim-1} and \eqref{splitInt}.\\
\\
The proof of \eqref{eq:reduction-to-eps-collar-prelim-1} is the same as in  \cite[Proof of (6.5)]{DjitteFallWethHadamard}, we rewrite it for completeness. It consists in showing that 
 $q_k \to  0$ pointwise on $\Omega \setminus \Omega_\eps$, that
 $\|q_k\|_{L^\infty(\Omega \setminus \Omega_\eps)}$ is bounded independently of $k$, so that \eqref{eq:reduction-to-eps-collar-prelim-1} follows by the dominated convergence theorem.
Since $|\eta_k|\leq 1$, $\eta_k  \to 1$ pointwise on $\R^N \setminus \partial \Omega$, and therefore a.e. on $\R^N$, it is sufficient to check that, as $k\rightarrow +\infty$
\begin{align}
&(-\Delta)^{s} \eta_{k}\rightarrow 0,\label{LapEtaGoes0}
\\\label{IEtaGoes0}
&\mathcal{I}_{s}\left[v,\eta_k \right]\rightarrow 0,
\end{align}
pointwise on $\Omega\setminus\Omega_{\eps}$, and that
\begin{align}\label{Fraceta_Bounded_unif_in_k}
&\|(-\Delta)^s \eta_k\|_{L^\infty(\Omega \setminus \Omega_\eps)}\leq C,\\\label{Ieta_Bounded_unif_in_k}
&\|\mathcal I_s[v,\eta_k]\|_{L^\infty(\Omega \setminus \Omega_\eps)}\leq C,\end{align} 
where $C>0$ is independent of $k$.
Choosing a compact neighborhood $K \subset \Omega$ of $\Omega \setminus \Omega_\eps$, we have 
$$
(-\Delta)^s \eta_k(z)= c_{N,s}\int_{\R^N \setminus K}\frac{1- \eta_k(t)}{|z-t|^{N+2s}}dt \qquad \text{for $z \in \Omega \setminus \Omega_\eps$ and $k$ sufficiently large,}
$$
and $$\frac{|1- \eta_k(t)|}{|z-t|^{N+2s}}\leq \frac{C}{1+|t|^{N+2s}}\qquad\text{ for $z \in \Omega \setminus \Omega_\eps,\: t\in \R^N \setminus K$},$$ where $C>0$ independent of $z$ and $t$. Consequently, \eqref{Ieta_Bounded_unif_in_k} holds true, and  by the dominated convergence theorem also \eqref{LapEtaGoes0} is satisfied. Similarly, one gets \eqref{IEtaGoes0} and \eqref{Ieta_Bounded_unif_in_k}.\\\\
Next we show \eqref{eq:SecondtermgoestoZERO}. Let us fix $\eps>0$ small so that the map $\Psi: \partial \Omega \times (-\eps, \eps) \to \Omega_{\eps}$ defined  in 
\eqref{def:localCoordBoundary} is bi-Lipschitz.
For $0<\eps' \le \eps$, 
\begin{equation}
 \lim_{k\to +\infty}\int_{\Omega_{\eps'}^+} q_k\,dz = \lim_{k\to +\infty}\int_{\partial \Omega} \int_{0}^{\eps'}{\rm Jac}_{{\Psi}}(\sigma,r) q_k({\Psi}(\sigma,r))\,dr d\sigma = \lim_{k\to +\infty}\frac{1}{k} \int_{\partial \Omega} \int_{0}^{k \eps'}j_k(\sigma,r) Q_k(\sigma,r) \,dr d\sigma, \label{claim-first-reduction}
\end{equation}
where   
\begin{equation}
  \label{eq:def-j-k-G-k}
j_k(\sigma,r) := {\rm Jac}_{{\Psi}}(\sigma,\frac{r}{k}) \quad \text{and}\quad Q_k(\sigma,r):=q_k({\Psi}(\sigma,\frac{r}{k})) \qquad \text{for a.e. $\sigma \in \partial \Omega,\: 0 \le r < k \eps.$}
\end{equation}
We note that 
  \begin{align*}
&\|j_k\|_{L^\infty(\partial \Omega \times [0,k\eps))} \le \|{\rm Jac}_{{\Psi}}\|_{L^\infty(\Omega_\eps)}< +\infty \quad \text{for all $k$, and}\\
&\lim_{k \to +\infty} j_k(\sigma,r)= {\rm Jac}_{{\Psi}}(\sigma,0)=1 \quad \text{for a.e. $\sigma \in \partial \Omega$, $r>0$.}
   \end{align*} 
By definition of the functions $q_k$ in (\ref{eq:def-ge-k}), 
\[
Q_k(\sigma,r) =Q_k^0(\sigma,r)\Big(Q_{k}^1(\sigma,r) - Q_{k}^2(\sigma,r)\Big) \qquad \text{for $\sigma \in \partial \Omega,\: 0 \le r < k \eps$}
\]
with 
\begin{align*}
Q_k^0(\sigma,r) &:= w({\Psi}(\sigma,\frac{r}{k}))\eta_k({\Psi}(\sigma,\frac{r}{k}))\\
Q_k^1(\sigma,r) &:= \Bigl(v   (-\Delta)^s \eta_k\Bigr)({\Psi}(\sigma,\frac{r}{k})) \\
Q_k^2(\sigma,r) &:= \mathcal I_s[v,\eta_k]({\Psi}(\sigma,\frac{r}{k})).
\end{align*}  
Let us observe that \[
\big|Q_0^1(\sigma,r)\big|\leq \|w\|_{L^{\infty}(\Omega)}.\]
Furthermore, from \cite[Corollary 6.4]{DjitteFallWethHadamard} and  \cite[Lemma 6.8]{DjitteFallWethHadamard} we know that there exists $\eps'>0$ with the property that    
\[
\big|k^{-s}Q_k^1(\sigma,r)\big|\le \frac{C r^s}{1+r^{1+2s}} \quad\mbox{ and }\quad
\big|k^{-s} Q_k^2(\sigma,r)\big| \le \frac{C}{1+r^{1+s}},
\]
for $k \in \N$, $0 \le r < k \eps'$, $\sigma \in \partial \Omega$, with a constant $C>0$, independent of $k$ and $(\sigma,r)$.
Hence, we deduce
\begin{equation}
  \label{eq:complete-proof-bound}
\frac{\big|Q_k(\sigma,r)\big|}{k} \leq C \|w\|_{L^{\infty}(\Omega)}\frac{1}{k^{1-s}}
\Bigl(\frac{r^{s}}{1+r^{1+2s}}+\frac{1}{1+r^{1+s}}\Bigr),
\end{equation}
By \eqref{claim-first-reduction}, and a  direct computation,  
\begin{align*}
\lim_{k \to +\infty}\Bigl|\,\int_{\Omega}q_k\, dz \,\Bigr|&\leq \lim_{k\to +\infty}\frac{1}{k} \int_{\partial \Omega} \int_{0}^{k \eps'}j_k(\sigma,r) \big|Q_k(\sigma,r)\big| \,dr d\sigma\\
&\leq C \|w\|_{L^{\infty}(\Omega)}\lim_{k\to +\infty}\frac{1}{k^{1-s}}
\int_{\partial \Omega} \int_{0}^{k \eps'}j_k(\sigma,r) 
\Bigl(\frac{r^{s}}{1+r^{1+2s}}+\frac{1}{1+r^{1+s}}\Bigr)\,dr d\sigma\\
&= C \|w\|_{L^{\infty}(\Omega)}
|\partial \Omega| \widetilde C_s 
\left(\lim_{k\to +\infty}\frac{1}{k^{1-s}}\right)=0
 \end{align*}
since $s \in (0,1)$, where $\widetilde C_s:=\int_{0}^{+\infty}
\Bigl(\frac{r^{s}}{1+r^{1+2s}}+\frac{1}{1+r^{1+s}}\Bigr)\,dr<+\infty$ since the integrand is integrable over $[0,+\infty)$. 
\end{proof}

\begin{lemma}\label{lemma:lm_2.4_DS_revisitedFIRST_PART}
Let $v$ such that $|v|\leq C\delta^s$. Let $w\in L^{\infty}(\Omega)$. Then for all $\mathcal X \in C^{0,1}\left(\mathbb{R}^{N}, \mathbb{R}^{N}\right)$, we have
\[
\lim_{k\to+\infty} \int_{\Omega}\eta_kv\langle\nabla\eta_k ,\mathcal X\rangle w\,dz=0.
\]
\end{lemma}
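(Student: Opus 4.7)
My plan is to exploit the fact that $\nabla\eta_k$ is supported in a very thin collar near $\partial\Omega$ where the factor $\delta^s$ is correspondingly small, and to show that the resulting volume shrinks faster than $|\nabla\eta_k|$ blows up.

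First I would compute $\nabla\eta_k(z)=-k\,\rho'(k\delta(z))\nabla\delta(z)$. Since $\rho\equiv 1$ on $(-1,1)$ and $\rho\in C^\infty_c(-2,2)$, the function $\rho'$ is supported in $\{1\le |t|\le 2\}$, so $\nabla\eta_k$ vanishes outside the set
\[
A_k:=\Big\{z\in\Omega:\ \tfrac{1}{k}\le \delta(z)\le \tfrac{2}{k}\Big\}\subset\Omega_{2/k}^+\setminus\Omega_{1/k}^+.
\]
On $A_k$ we have the pointwise bounds $|\eta_k|\le 1$, $|\nabla\eta_k|\le k\|\rho'\|_\infty\|\nabla\delta\|_\infty$, and, by hypothesis, $|v(z)|\le C\delta^s(z)\le C(2/k)^s$. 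Since $\mathcal X$ is Lipschitz on $\mathbb R^N$ it is bounded on $\overline\Omega$, and $w\in L^\infty(\Omega)$. Combining these estimates the integrand satisfies $|\eta_k v\,\langle\nabla\eta_k,\mathcal X\rangle w|\le C k^{1-s}\,\chi_{A_k}$ for a constant $C$ independent of $k$.

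Next I would estimate $|A_k|$. Using the bi-Lipschitz change of variables $\Psi$ from \eqref{def:localCoordBoundary} and the bound $\|{\rm Jac}_\Psi\|_{L^\infty(\Omega_\eps)}<+\infty$, one has, for all $k$ sufficiently large,
\[
|A_k|=\int_{\partial\Omega}\!\int_{1/k}^{2/k}{\rm Jac}_{\Psi}(\sigma,r)\,dr\,d\sigma\le \frac{C|\partial\Omega|}{k}.
\]
Putting the two estimates together yields
\[
\Big|\int_{\Omega}\eta_k v\,\langle\nabla\eta_k,\mathcal X\rangle w\,dz\Big|\le C k^{1-s}\cdot|A_k|\le \frac{C'}{k^{s}},
\]
which tends to $0$ as $k\to+\infty$ because $s>0$. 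This proves the lemma.

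There is no real obstacle here: the statement is a routine size estimate once one notices the compact support of $\rho'$. The only mild subtlety is that the apparent blow-up $|\nabla\eta_k|\sim k$ is exactly compensated by the factor $\delta^s\lesssim k^{-s}$ from the hypothesis on $v$ together with the $k^{-1}$ from the thickness of the collar $A_k$, leaving a decaying $k^{-s}$ which is what makes the limit vanish.
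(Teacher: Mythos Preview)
Your proof is correct and follows essentially the same approach as the paper's. The only cosmetic difference is that the paper bounds the integrand by $C\delta^{s-1}$ and then computes $\int_{A_k}\delta^{s-1}\,dz\le C k^{-s}$ via the change of variables $\Psi$, whereas you separately bound $|v|\le C(2/k)^s$ and $|A_k|\le C/k$; both routes yield the same $k^{-s}$ decay.
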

\begin{proof}
Recalling  the definition of $\eta_k$
\begin{align*}
\Big|\int_{\Omega}\eta_kv\langle\nabla\eta_k ,\mathcal X\rangle w\,dz\Big|
&\leq \|w\|_{L^{\infty}(\Omega)}\int_{\Omega}|v|\, \big|\langle\nabla\eta_k ,\mathcal X\rangle\big| dz= \|w\|_{L^{\infty}(\Omega)}k\int_{\Omega^+_\frac{2}{k}\setminus\Omega^+_{\frac{1}{k}}}|v|\, |\rho'(k\delta)|\, \big|\langle\nabla\delta ,\mathcal X\rangle\big| dz
\\
&\leq \|w\|_{L^{\infty}(\Omega)}\widetilde C k\int_{\Omega^+_\frac{2}{k}\setminus\Omega^+_{\frac{1}{k}}}|v| dz
\leq 2C\|w\|_{L^{\infty}(\Omega)}\widetilde C\int_{\Omega^+_\frac{2}{k}\setminus\Omega^+_{\frac{1}{k}}}\frac{|v|}{\delta}dz
\\
&\leq 2\|w\|_{L^{\infty}(\Omega)}\widetilde C C \int_{\Omega^+_\frac{2}{k}\setminus\Omega^+_{\frac{1}{k}}}\delta^{s-1}\,dz,
\end{align*}
where we have used that $1\leq k\delta(z)\leq 2$ for $z\in\Omega^+_\frac{2}{k}\setminus\Omega^+_\frac{1}{k}$, with $\Omega^+_{\alpha}:=\{z\in\Omega\ :\ \delta(z)<\alpha\},$ and  the assumption $|v|\leq C\delta^s$.
In order to compute the last integral, we fix $\eps>0$ small enough, such that the map $\Psi: \partial \Omega \times (-\eps, \eps) \to \Omega_{\eps}$,  
$
\Psi(\sigma,r):= \sigma - r \nu(\sigma),
$ defined in \eqref{def:localCoordBoundary}
is bi-Lipschitz. Lt us recall that $\delta(\Psi(\sigma,r)) = r $ for $(\sigma,r)\in \partial\Omega\times (0,\eps)$, and that $\|{\rm Jac}_{{\Psi}}\|_{L^\infty(\Omega_\eps)}< +\infty$. Hence, since $\Omega^+_\frac{2}{k}\setminus\Omega^+_{\frac{1}{k}}\subset \Omega_\eps^+ $ for $k$ large, we have 
\begin{align*}
&\int_{\Omega^+_\frac{2}{k}\setminus\Omega^+_{\frac{1}{k}}}\delta^{s-1}\,dz
=\int_{\partial\Omega}\int_{\frac{1}{k}}^{\frac{2}{k}}
{\rm Jac}_{{\Psi}}(\sigma,r)\delta(\Psi(\sigma,r))^{s-1}drd\sigma
=\int_{\partial\Omega}\int_{\frac{1}{k}}^{\frac{2}{k}}
{\rm Jac}_{{\Psi}}(\sigma,r)r^{s-1}drd\sigma
 \\
 &\leq \|{\rm Jac}_{{\Psi}}\|_{L^\infty(\Omega_\eps)}|\partial\Omega|\lim_{k\rightarrow +\infty} \int_{\frac{1}{k}}^{\frac{2}{k}} r^{s-1}dr=\|{\rm Jac}_{{\Psi}}\|_{L^\infty(\Omega_\eps)}|\partial\Omega|\frac{2^s-1}{s}\frac{1}{k^s}\leq C\frac{1}{k^s}.\end{align*}
The conclusion follows passing to the limit as $k\rightarrow +\infty$.
\end{proof}

\begin{remark}[Estimates on $G_s(x,z)$ and  $\nabla_z G_s(x,z)$]\label{remark:IntegrabilityNablaG}$\,$\\
Let $x\in\Omega\subset\mathbb R^N$, clearly $z\mapsto G_s(x,z)\in C^1_{loc}(\Omega\setminus\{x\}$).
It was proved by Chen and Song \cite{ChenSong}, and Kulczycki \cite{Kulczycki} that, if $\Omega\subset\mathbb R^N$ is a bounded open set  of class $C^{1,1}$, then for all $x,z\in\Omega$ with $x\neq z$, there holds 
\begin{equation}\label{greenfunctEstimate}
    C_1\min\Big(\frac{1}{|x-z|^{N-2s}}, \frac{\delta^s_\Omega(x)\delta^s_\Omega(z)}{|x-z|^N}\Big)\leq \frac{G_s(x,z)}{\Lambda_{N,s}}\leq \min\Big(\frac{1}{|x-z|^{N-2s}}, C_2\frac{\delta^s_\Omega(x)\delta^s_\Omega(z)}{|x-z|^N}\Big),
\end{equation}
for some constant $C_1, C_2>0$ and an explicit constant $\Lambda_{N,s}$. As a consequence, since in particular $G_s(x,z)\leq \frac{\Lambda_{N,s}}{|x-z|^{N-2s}}\in L^1(\Omega)$, it follows that
\begin{equation}
z\mapsto G_s(x,z)\in L^1(\Omega).
\end{equation}
Furthermore, from \cite[Corollary 3.3]{BogdanKulczyckiNowak} it is known that, for a bounded open set $\Omega\subset\mathbb R^N$,   for all $x,z\in\Omega$ with $x\neq z$, there holds
\begin{equation}\label{greenfunctGRADIENTEstimate}
    \big|\nabla _z G_s(x,z)\big|
    \leq 
    \left\{\begin{array}{ll}
    N\displaystyle{\frac{G_s(x,z)}{|x-z|} }&\mbox{if }z\in N_x \\[0.4cm]
   N\displaystyle{ \frac{G_s(x,z)}{\delta_\Omega(z)}}& \mbox{if }z\in \Omega\setminus N_x
    \end{array}
    \right..
\end{equation}
where
\[
N_x:=\{z\in\Omega\ :\ |x-z|
\leq\delta_\Omega(z)\}.
\]
When $\Omega$ is of class $C^{1,1}$, and $x\in\Omega$, it follows from \eqref{greenfunctEstimate} and \eqref{greenfunctGRADIENTEstimate} that 
\begin{align}\label{eq:GradGL1}
z\mapsto  \big|\nabla_z G_s(x,z)\big|\in L^1(\Omega), \ &\mbox{if}\ s>\frac{1}{2}
\\
\label{eq:FieldGradGL^1}
z\mapsto \langle z-x,\nabla_z G_s(x,z)\rangle \in L^1(\Omega),\ & \mbox{for any}\ s\in (0,1).
\end{align}
Indeed, let us fix $\eps>0$ small enough so that $\eps<\frac{\delta(x)}{2}$, and that $|x-z|>\frac{\delta(x)}{2}$ for every $z\in\Omega_\eps^+$, where  $\Omega_\eps^+$ is the inner neighborhood of $\partial\Omega$ as defined in \eqref{eq:defSetOmegaeps}. 
By this choice, for any $z\in\Omega_\eps^+$ one has $|x-z|>\frac{\delta(x)}{2}>\eps>\delta(z)$, namely 
\[\Omega_\eps^+\subset (\Omega\setminus N_x).\]
Let us split $\Omega= N_x\cup \Omega_{\eps}^+\cup I_{x,\eps}$,
where
\[
I_{x,\eps}:=\Omega\setminus (N_x\cup \Omega_\eps^+)\ (\subset \Omega\setminus N_x).\]
By \eqref{greenfunctGRADIENTEstimate} and \eqref{greenfunctEstimate} it follows that:
\begin{align}\label{EstimateN}
&\mbox{for  }z\in N_x: \quad\big|\nabla_z G_s(x,z)\big|\leq N  \frac{G_s(x,z)}{|x-z|}\leq N \Lambda_{N,s}|x-z|^{-(N-2s+1)}\\\label{EstimateD}
&\mbox{for  }z\in \Omega_{\eps}^+: 
\quad\big|\nabla_z G_s(x,z)\big|\leq N  \frac{G_s(x,z)}{\delta_{\Omega}(z)}\leq \frac{N \Lambda_{N,s}C_2 \delta^s_\Omega(x)}{|x-z|^N}\delta_{\Omega}^{s-1}(z)
\\\label{EstimateE}
&\mbox{for  }z\in I_{x,\eps}:\quad \big|\nabla_z G_s(x,z)\big|\leq N  \frac{G_s(x,z)}{\delta_{\Omega}(z)}\leq \frac{N \Lambda_{N,s}C_2 \delta^s_\Omega(x)}{|x-z|^N\delta_{\Omega}^{1-s}(z)}.
\end{align}
As a consequence
\begin{align*}
\int_{\Omega}\big|\nabla_z G_s(x,z)\big|\,dz
\leq &C\int_{\Omega}|x-z|^{-(N-2s+1)}\,dz
+ 
C\int_{\Omega_\eps^+}
\delta_{\Omega}^{s-1}(z)
\,dz
+ C |\Omega|,\\
\int_{\Omega}\big|\langle z-x,\nabla_z G_s(x,z)\rangle\big|\,dz\leq & C\int_{\Omega}|x-z|^{-(N-2s)}\,dz
+ 
C\int_{\Omega_\eps^+}
\delta_{\Omega}^{s-1}(z)
\,dz
+ C |\Omega|,
\end{align*}
where $C=C(N,s,x)>0$ and, in order to estimate the term in \eqref{EstimateD} we have used that $|x-z|>\frac{\delta(x)}{2}$ for every $z\in\Omega_\eps$, 
while for the term in \eqref{EstimateE} we have used that $|x-z|>\delta_{\Omega}(z)>\eps$.
The conclusion follows observing that 
\begin{align}\nonumber
& \int_{\Omega}|x-z|^{-(N-2s+1)}\,dz    <+\infty, \quad\mbox{if}\ s\in(\frac{1}{2},1);\\\nonumber
&\int_{\Omega}|x-z|^{-(N-2s)}\,dz <+\infty,\quad\mbox{if}\ s\in(0,1);\\\label{computationInLocalCoord}
&\int_{\Omega_\eps^+}
\delta_{\Omega}^{s-1}(z)
\,dz =\int_{\partial\Omega}\int_{0}^{\eps}
{\rm Jac}_{{\Psi}}(\delta\circ \Psi)^{s-1}drd\sigma 
\leq C \int_{0}^{\eps} r^{s-1}dr<+\infty,
\end{align}
where the map  $\Psi: \partial \Omega \times (-\eps, \eps) \to \Omega_{\eps}$ in \eqref{computationInLocalCoord} is defined in \eqref{def:localCoordBoundary}, and it is bi-Lipschitz for $\eps$ small enough, moreover we have used that $\delta(\Psi(\sigma,r)) = r $ for $(\sigma,r)\in \partial\Omega\times (0,\eps)$, and that $\|{\rm Jac}_{{\Psi}}\|_{L^\infty(\Omega_\eps)}< +\infty$.  
Actually, we stress that in \cite[Lemma 9]{BogdanJakubowski}, \eqref{eq:GradGL1} and \eqref{eq:FieldGradGL^1} have been shown to hold uniformly in $x$, but both only when $s>\frac{1}{2}$).
\end{remark}

\begin{remark}[Regularity of $\psi_{\mu,x}G_s(x,\cdot)$]
In \cite[Lemma 2.2]{DjitteSueur} it has been shown that
\begin{equation}
    \label{fracLappsiGBounded}
    (-\Delta)^s\Big(\psi_{\mu,x}G_s(x,\cdot)\Big)\in L^{\infty}(\Omega),
\end{equation} 
furthermore, $\psi_{\mu,x}G_s(x,\cdot)=0$ in $\mathbb R^N\setminus \Omega$. As a consequence, by interior and boundary regularity results (see \cite{Ros-Oton_Serra_Regularity_2016,Ros-Oton_Serra_Dirichlet_2014, Fall_Jarohs_2021}), the function $v:=\psi_{\mu,x}G_s(x,\cdot)$, satisfies:  
\begin{equation}\label{Regularity_PsiG}
v\in C^s(\mathbb R^N)\cap C^1_{loc}(\Omega),\quad \frac{v}{\delta^s}\in C^{\alpha}(\overline\Omega)\ \forall\alpha\in (0,s), \quad \delta^{1-\alpha} \big|\nabla \frac{v}{\delta^s}\big|\leq C\mbox{ in }\Omega, \quad \delta^{1-s}\nabla v\in C^{\beta}(\overline\Omega)\mbox{ for some }\beta \in (0, \alpha).\end{equation}
Let us observe that, the latter property, it follows  that
\begin{equation}\label{eq:nablaPsiGL1}
|\nabla v|\in L^1(\Omega),\  \mbox{for any}\ s\in (0,1).
\end{equation}
Indeed, let us fix $\eps>0$ small enough, and split the integral
\begin{align*}
\int_{\Omega}|\nabla v|\,dz\leq C\int_{\Omega}\delta^{s-1}\,dz=\int_{\Omega\setminus\Omega_{\eps}}\delta^{s-1}\,dz+\int_{\Omega_{\eps}^+}\delta^{s-1}\,dz
\leq C + \int_{\Omega_{\eps}^+}\delta^{s-1}\,dz <+\infty,
\end{align*}
where $\Omega_{\eps}^+$ is as in \eqref{eq:defSetOmegaeps}, and
the conclusion follows as in \eqref{computationInLocalCoord}. 
\end{remark}
We  recall the following results from \cite{DjitteSueur}:
\begin{lemma} \label{lemma:lm2.2_DS_revisited} Let $x \in \Omega$, $G_{s}(x, \cdot)$  the Green function with singularity at $x$, and $\psi_{x, \mu}$ the cut-off function defined as in \eqref{psi-mu}. Let $\varepsilon>0$ such that $\overline{B_{2 \varepsilon}(x)} \subset \Omega$.
If $w\in L^1(\Omega\setminus B_{\varepsilon}(x))$, then   we have
\begin{align*}
& \lim _{\mu \rightarrow 0^{+}} \int_{\Omega\setminus B_{\varepsilon}(x)} w\, G_{s}(x, \cdot)(-\Delta)^{s} \psi_{\mu, x} dz=0;
\\
& \lim _{\mu \rightarrow 0^{+}} \int_{\Omega\setminus B_{\varepsilon}(x)} w\, \mathcal{I}_{s}\left[G_{s}(x, \cdot), \psi_{\mu, x}\right] dz=0.
\end{align*}
\end{lemma}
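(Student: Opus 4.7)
The plan rests on the observation that the cut-off $\psi_{\mu,x}$ defined in \eqref{psi-mu} satisfies $\psi_{\mu,x}\equiv 1$ outside the ball $B_{r_\mu}(x)$, where $r_\mu:=\delta_\Omega(x)\mu/2$. Hence for $\mu$ small enough so that $r_\mu<\varepsilon/2$, one has $\psi_{\mu,x}\equiv 1$ on $\Omega\setminus B_{\varepsilon/2}(x)$, and in particular on the integration region $\Omega\setminus B_\varepsilon(x)$. I would derive pointwise estimates $|(-\Delta)^s\psi_{\mu,x}(z)|\le C_\varepsilon \mu^N$ and $|\mathcal{I}_s[G_s(x,\cdot),\psi_{\mu,x}](z)|\le C_\varepsilon(\mu^N+\mu^{2s})$ that are uniform in $z\in\Omega\setminus B_\varepsilon(x)$; once these are in place, both limits follow by bounding the integrands by a constant times $|w|$ and letting $\mu\to 0^+$, using $w\in L^1(\Omega\setminus B_\varepsilon(x))$ together with the uniform bound $G_s(x,z)\le \Lambda_{N,s}\varepsilon^{-(N-2s)}$ coming from \eqref{greenfunctEstimate}.

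For the first limit, since $\psi_{\mu,x}(z)=1$ and $\psi_{\mu,x}(t)=1$ for $t\notin B_{r_\mu}(x)$, the singular integral defining the fractional Laplacian collapses to
\[
(-\Delta)^s\psi_{\mu,x}(z)=c_{N,s}\int_{B_{r_\mu}(x)}\frac{1-\psi_{\mu,x}(t)}{|z-t|^{N+2s}}dt.
\]
For such $z$ and $t$ one has $|z-t|\ge |z-x|-|t-x|\ge\varepsilon-r_\mu\ge\varepsilon/2$, which gives $|(-\Delta)^s\psi_{\mu,x}(z)|\le c_{N,s}(2/\varepsilon)^{N+2s}|B_{r_\mu}(x)|\le C_\varepsilon \mu^N$ uniformly in $z$. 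Multiplying by $|w(z)|\,G_s(x,z)$ and integrating over $\Omega\setminus B_\varepsilon(x)$ proves the first claim at once.

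The second limit proceeds in the same spirit. Again using $\psi_{\mu,x}(z)=1$, one writes
\[
\mathcal{I}_s[G_s(x,\cdot),\psi_{\mu,x}](z)=c_{N,s}\int_{B_{r_\mu}(x)}\frac{\bigl(G_s(x,z)-G_s(x,t)\bigr)\bigl(1-\psi_{\mu,x}(t)\bigr)}{|z-t|^{N+2s}}dt,
\]
and the contribution from $G_s(x,z)$ is $O(\mu^N)$ uniformly in $z$ exactly as before. The only piece requiring mild care is the contribution involving $G_s(x,t)$, because the Green function blows up at $t=x$; here I would invoke the upper estimate $G_s(x,t)\le\Lambda_{N,s}|x-t|^{2s-N}$ from \eqref{greenfunctEstimate} and compute, via polar coordinates, $\int_{B_{r_\mu}(x)}|x-t|^{2s-N}dt=C\, r_\mu^{2s}=O(\mu^{2s})$. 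Combining yields the uniform bound $|\mathcal{I}_s[G_s(x,\cdot),\psi_{\mu,x}](z)|\le C_\varepsilon(\mu^N+\mu^{2s})$, which multiplied by $|w|\in L^1$ and integrated gives the second limit. The argument is essentially a bookkeeping of how far the cut-off region lies from the integration region, so no serious analytic obstacle arises beyond correctly handling the integrable singularity $|x-t|^{2s-N}$ on the small ball around $x$.
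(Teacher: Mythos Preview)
Your proposal is correct and follows essentially the same route as the paper, which cites \cite[Lemma 2.2]{DjitteSueur} for the pointwise convergence and the uniform bounds $C(\varepsilon)$ on $\Omega\setminus B_\varepsilon(x)$ and then applies dominated convergence with $C(\varepsilon)|w|$ as dominating function. Your version is in fact slightly sharper, since you extract explicit decay rates $O(\mu^N)$ and $O(\mu^{2s})$ for the two integrands, which makes the pointwise convergence and the domination a single step.
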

\begin{proof}
    This is essentially contained in the proof of \cite[Lemma 2.2]{DjitteSueur}, where it is proved that 
    $G_s(x,z)(-\Delta)^{s} \psi_{\mu, x}(z)\rightarrow 0$ and $\mathcal{I}_{s}\left[G_{s}(x, \cdot), \psi_{\mu, x}\right](z)\rightarrow 0
   $, as $\mu\rightarrow 0^+$ and that $\left|G_s(x,z)(-\Delta)^{s} \psi_{\mu, x}(z)\right|\leq C(\varepsilon)$ and $\left|\mathcal{I}_{s}\left[G_{s}(x, \cdot), \psi_{\mu, x}\right](z)\right|\leq C(\varepsilon)$, for $\mu>0$ small and $z\in\Omega\setminus B_{\varepsilon}(x)$.  So that the conclusion follows by the dominated convergence theorem.
\end{proof}

\begin{lemma} (\cite[Lemma 2.6]{DjitteSueur})
\label{psi_lm}
 Fix $x \in \Omega$, and let $G_{s}(x, \cdot)$ be the Green function with singularity at $x$, and $\psi_{x, \mu}$ be the cut-off function defined as in \eqref{psi-mu}. Let $\varepsilon>0$ be such that $\overline{B_{2 \varepsilon}(x)} \subset \Omega$. If  $w\in C(\overline{B_{\varepsilon}(x)})$\footnote{In \cite[Lemma 2.6]{DjitteSueur} the continuity of $w$ is assumed in the whole $\Omega$, but the proof uses only the continuity in $x$, see \cite[Remark 2.7]{DjitteSueur}}
 then we have
\begin{align}
& \lim _{\mu \rightarrow 0^{+}} \int_{B_{\varepsilon}(x)} w\, G_{s}(x, \cdot)(-\Delta)^{s} \psi_{\mu, x} d z=-w(x)\\
& \lim _{\mu \rightarrow 0^{+}} \int_{B_{\varepsilon}(x)} w\, \mathcal{I}_{s}\left[G_{s}(x, \cdot), \psi_{\mu, x}\right] d z=-2 w(x)
\end{align}
\end{lemma}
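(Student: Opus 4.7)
The plan is to follow the scale-invariance argument of \cite[Lemma 2.6]{DjitteSueur}, noting (as recorded in the footnote) that their proof only uses the continuity of $w$ at the point $x$ together with an $L^\infty$-bound elsewhere, which allows us to weaken the hypothesis on $w$. The key observation is that $\psi_{\mu,x}$ is exactly scale-invariant: writing $\psi_{\mu,x}(z)=\Psi_x\bigl((z-x)/\mu\bigr)$ with $\Psi_x(y):=1-\rho(8|y|^2/\delta_\Omega^2(x))$ independent of $\mu$, I obtain $(-\Delta)^s\psi_{\mu,x}(z)=\mu^{-2s}\bigl((-\Delta)^s\Psi_x\bigr)\bigl((z-x)/\mu\bigr)$ and, setting $f(u):=|u|^{-(N-2s)}$, the analogous relation $\mathcal I_s[F_s(x,\cdot),\psi_{\mu,x}](x+\mu u)=b_{N,s}\mu^{-N}\mathcal I_s[f,\Psi_x](u)$. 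Splitting $G_s(x,\cdot)=F_s(x,\cdot)-H_s(x,\cdot)$, the $H_s$-contributions in both integrals pick up a residual factor $\mu^{N-2s}\to 0^+$, because $H_s(x,\cdot)$ is bounded on compact subsets of $\Omega$ (unlike the singular $F_s$); hence the problem reduces to the $F_s$-parts.

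For the first identity, the change of variables $y=(z-x)/\mu$ turns the $F_s$-part into $b_{N,s}\int_{B_{\varepsilon/\mu}(0)} w(x+\mu y)\,|y|^{-(N-2s)}\bigl((-\Delta)^s\Psi_x\bigr)(y)\,dy$. Local integrability of $|y|^{-(N-2s)}$ near $0$ (thanks to $N>2s$) and the $|y|^{-(N+2s)}$-decay of $(-\Delta)^s\Psi_x$ at infinity (since $\Psi_x\equiv 1$ outside a bounded set) provide a dominating function; the continuity of $w$ at $x$ and dominated convergence then give the limit $b_{N,s}w(x)\int_{\mathbb R^N}|y|^{-(N-2s)}(-\Delta)^s\Psi_x(y)\,dy$. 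Writing $\Psi_x=1-\bar\rho_x$ with $\bar\rho_x\in C_c^\infty(\mathbb R^N)$ and $\bar\rho_x(0)=1$, and using that $F_s(0,\cdot)=b_{N,s}|\cdot|^{-(N-2s)}$ is the fundamental solution of $(-\Delta)^s$ on $\mathbb R^N$, the last integral equals $-\bar\rho_x(0)/b_{N,s}=-1/b_{N,s}$, yielding $-w(x)$.

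For the second identity, the same rescaling reduces the $F_s$-part to $b_{N,s}w(x)\int_{\mathbb R^N}\mathcal I_s[f,\Psi_x](u)\,du=-b_{N,s}w(x)\int_{\mathbb R^N}\mathcal I_s[f,\bar\rho_x](u)\,du$ (the constant part of $\Psi_x$ contributing $0$). Applying the product rule \eqref{pl} to $f$ and $\bar\rho_x$, integrating over $\mathbb R^N$, and using $(-\Delta)^sf=\delta_0/b_{N,s}$ together with $\bar\rho_x(0)=1$, I compute $\int f(-\Delta)^s\bar\rho_x\,du=1/b_{N,s}$ and $\int\bar\rho_x(-\Delta)^sf\,du=1/b_{N,s}$. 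Together with $\int_{\mathbb R^N}(-\Delta)^s(f\bar\rho_x)\,du=0$ — obtained by excising the singularity of $f$ at $0$ with a shrinking cut-off and reducing to $\int(-\Delta)^sg\,du=0$ for $g\in C_c^\infty(\mathbb R^N)$ by Fourier symmetry — this gives $\int\mathcal I_s[f,\bar\rho_x]\,du=2/b_{N,s}$, producing $-2w(x)$.

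The main technical obstacle is the justification of dominated convergence uniformly in $\mu$ in each rescaled integral, and of the vanishing $\int_{\mathbb R^N}(-\Delta)^s(f\bar\rho_x)\,du=0$ in presence of the $|u|^{-(N-2s)}$-singularity of $f$; both are handled by the decay and cut-off estimates already developed in \cite{DjitteSueur}, which we quote rather than reprove.
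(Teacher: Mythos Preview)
The paper does not supply its own proof of this lemma: it is simply quoted from \cite[Lemma~2.6]{DjitteSueur} (with the footnoted remark that only continuity at $x$ is used), so there is no in-paper argument to compare against. Your sketch faithfully reproduces the scale-invariance proof of \cite{DjitteSueur}---the rescaling $\psi_{\mu,x}(z)=\Psi_x((z-x)/\mu)$, the split $G_s=F_s-H_s$ with the $H_s$-part contributing $O(\mu^{N-2s})$, and the identification of the limiting constants via the fundamental-solution identity and the product rule---so your proposal is correct and aligned with the source the paper cites.
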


\begin{lemma}\label{lemma:lm_2.4_DS_revisited}
Let $v\in C^1_{loc}(\Omega)$, $|v|\leq C\delta^s$, $|\nabla v|\in L^1(\Omega)$.  Fix $x \in \Omega$, and let $G_{s}(x, \cdot)$ be the Green function with singularity at $x$, and $\psi_{x, \mu}$ be the cut-off function defined as in \eqref{psi-mu}. Let $\eta_k$ be the cut-off function as defined in \eqref{eta-k}.
Then, for all $\mathcal X \in C^{0,1}\left(\mathbb{R}^{N}, \mathbb{R}^{N}\right)$, we have 
  \[ 
\lim_{\mu\to 0^+}\lim_{k\to+\infty} 
\int_{\Omega}\eta_k\langle\nabla\big(\eta_k v\big),\mathcal X\rangle(-\Delta)^s\Big(\psi_{\mu,x}G_s(x,\cdot)\Big)dz 
=
\langle\nabla v (x), \mathcal X (x)\rangle.
\]
\begin{proof}
It is similar to the proof of \cite[Lemma 2.4]{DjitteSueur}. We first apply the Leibniz rule to get
\begin{align}\nonumber
\int_{\Omega}\eta_k\langle\nabla\big(\eta_k v\big),\mathcal X\rangle(-\Delta)^s\Big(\psi_{\mu,x}G_s(x,\cdot)\Big)dz
&= 
\int_{\Omega}\eta_kv\langle\nabla\eta_k ,\mathcal X\rangle(-\Delta)^s\Big(\psi_{\mu,x}G_s(x,\cdot)\Big)dz\\
&+ \int_{\Omega}\eta_k^2\langle\nabla v,\mathcal X\rangle
(-\Delta)^s\Big(\psi_{\mu,x}G_s(x,\cdot)\Big)dz.\label{Scntrm}
\end{align}
For the  first term in \eqref{Scntrm},  since $|v|\leq C\delta^s$ by assumption, and $w:=(-\Delta)^s\Big(\psi_{\mu,x}G_s(x,\cdot)\Big)\in L^{\infty}(\Omega)$ by \eqref{fracLappsiGBounded}, we can use Lemma \ref{lemma:lm_2.4_DS_revisitedFIRST_PART},  hence $\lim_{k\to+\infty} \int_{\Omega}\eta_kv\langle\nabla\eta_k ,\mathcal X\rangle(-\Delta)^s\Big(\psi_{\mu,x}G_s(x,\cdot)\Big)\ dz=0$, and so
\begin{equation}\label{firstTerminScnAA}\lim_{\mu\to 0^+}\lim_{k\to+\infty} \int_{\Omega}\eta_kv\langle\nabla\eta_k ,\mathcal X\rangle(-\Delta)^s\Big(\psi_{\mu,x}G_s(x,\cdot)\Big)\,dz=0.\end{equation} For the second term in \eqref{Scntrm}, we first pass to the limit as $k\rightarrow +\infty$, by the dominated convergence theorem (using the assumption $|\nabla v|\in L^1(\Omega)$, and  that also $(-\Delta)^s\Big(\psi_{\mu,x}G_s(x,\cdot)\Big)\in L^{\infty}(\Omega)$), and then we use the product rule  \eqref{pl}, observing that $\psi_{\mu,x}(-\Delta)^s G_s(x,\cdot)=0$:
\begin{align}\nonumber\lim_{k\rightarrow +\infty}\int_{\Omega}\eta_k^2\langle\nabla v,\mathcal X\rangle
(-\Delta)^s\Big(\psi_{\mu,x}G_s(x,\cdot)\Big)dz&=
\int_{\Omega}\langle\nabla v,\mathcal X\rangle
(-\Delta)^s\Big(\psi_{\mu,x}G_s(x,\cdot)\Big)dz\\\label{interm}
&=\int_{\Omega}\langle\nabla v,\mathcal X\rangle \Bigg(G_s(x,\cdot)(-\Delta)^s\psi_{\mu,x}- \mathcal I_s[G_s(x,\cdot), \psi_{\mu,x}]\Bigg)dz.\end{align}
Finally we pass to the limit as $\mu\rightarrow 0^+$ into \eqref{interm}, applying both Lemma \ref{lemma:lm2.2_DS_revisited} and Lemma \ref{psi_lm} with the function $w=\langle\nabla v , \mathcal X\rangle$ (which is $L^1(\Omega)\cap C_{loc}(\Omega)$ by assumption), thus getting
\begin{align}
\label{secondTerminScnAA}\nonumber
\lim_{\mu\to 0^+}\lim_{k\rightarrow +\infty}\int_{\Omega}\eta_k^2\langle\nabla v,\mathcal X\rangle
(-\Delta)^s\Big(\psi_{\mu,x}G_s(x,\cdot)\Big)dz&=
\lim_{\mu\to 0^+}\int_{\Omega}\langle\nabla v,\mathcal X\rangle
\Bigg(G_s(x,\cdot)(-\Delta)^s\psi_{\mu,x}- \mathcal I_s[G_s(x,\cdot), \psi_{\mu,x}]\Bigg)dz\\
&=\langle\nabla v (x), \mathcal X (x)\rangle.\end{align}
Substituting \eqref{secondTerminScnAA} and \eqref{firstTerminScnAA} into \eqref{Scntrm} we get the conclusion.
\end{proof}
\end{lemma}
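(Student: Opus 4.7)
The plan is to mimic the strategy of \cite[Lemma 2.4]{DjitteSueur}, which was already hinted at in the presentation of earlier lemmas, and decompose the integrand using the Leibniz rule and then the fractional product rule \eqref{pl}. The two resulting pieces will be handled using the preliminary lemmas already established in this section.

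First, I would apply the Leibniz rule $\nabla(\eta_k v) = v\nabla \eta_k + \eta_k \nabla v$ to split the integral as
\[
\int_\Omega \eta_k\langle \nabla(\eta_k v),\mathcal X\rangle\,(-\Delta)^s\bigl(\psi_{\mu,x}G_s(x,\cdot)\bigr)\,dz = A_{k,\mu}+B_{k,\mu},
\]
where $A_{k,\mu}$ involves $\eta_k v\langle\nabla\eta_k,\mathcal X\rangle$ and $B_{k,\mu}$ involves $\eta_k^2\langle\nabla v,\mathcal X\rangle$, both multiplied by $(-\Delta)^s(\psi_{\mu,x}G_s(x,\cdot))$. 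By \eqref{fracLappsiGBounded} the function $w_\mu:=(-\Delta)^s(\psi_{\mu,x}G_s(x,\cdot))$ lies in $L^\infty(\Omega)$, so for the first term the hypothesis $|v|\le C\delta^s$ makes Lemma \ref{lemma:lm_2.4_DS_revisitedFIRST_PART} directly applicable, giving $\lim_{k\to+\infty}A_{k,\mu}=0$ for every fixed $\mu>0$.

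For $B_{k,\mu}$, since $|\nabla v|\in L^1(\Omega)$ and $w_\mu$ is bounded, I would pass to the limit in $k$ by dominated convergence (using $|\eta_k^2|\le 1$ and $\eta_k\to 1$ a.e.), obtaining
\[
\lim_{k\to+\infty}B_{k,\mu}=\int_\Omega\langle\nabla v,\mathcal X\rangle\,(-\Delta)^s\bigl(\psi_{\mu,x}G_s(x,\cdot)\bigr)\,dz.
\]
Next, I would apply the fractional product rule \eqref{pl}. The key cancellation is that $\psi_{\mu,x}$ vanishes in a neighborhood of $x$, so $\psi_{\mu,x}(-\Delta)^s G_s(x,\cdot)=0$ (the Dirac mass at $x$ is killed). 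What remains is
\[
\int_\Omega\langle\nabla v,\mathcal X\rangle\Bigl(G_s(x,\cdot)(-\Delta)^s\psi_{\mu,x}-\mathcal I_s[G_s(x,\cdot),\psi_{\mu,x}]\Bigr)\,dz.
\]

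Finally, to send $\mu\to 0^+$, fix $\eps>0$ small with $\overline{B_{2\eps}(x)}\subset\Omega$ and split $\Omega=B_\eps(x)\cup(\Omega\setminus B_\eps(x))$. Set $w:=\langle\nabla v,\mathcal X\rangle$; by hypothesis $w\in L^1(\Omega)$ (since $\mathcal X$ is Lipschitz, hence bounded on $\overline\Omega$) and $w\in C(\overline{B_\eps(x)})$ (since $v\in C^1_{loc}(\Omega)$). On $\Omega\setminus B_\eps(x)$, Lemma \ref{lemma:lm2.2_DS_revisited} shows both pieces vanish in the $\mu\to 0^+$ limit. On $B_\eps(x)$, Lemma \ref{psi_lm} gives the two crucial contributions $-w(x)$ and $-2w(x)$ from the two pieces respectively, so the overall limit is $-w(x)-(-2w(x))=w(x)=\langle\nabla v(x),\mathcal X(x)\rangle$, which is the desired identity.

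The main technical obstacle is the delicate simultaneous handling of the two singularities (at the boundary $\partial\Omega$ and at the point $x$), which is exactly why the double limit is taken in the prescribed order $k\to+\infty$ first, then $\mu\to 0^+$: the boundary cut-off must be removed before we can invoke the sharp asymptotic identifications in Lemma \ref{psi_lm}, and the boundedness of $w_\mu$ (uniform in $k$ for each fixed $\mu$) is what enables the dominated convergence argument. The hardest aspect to verify rigorously is the dominated convergence for $B_{k,\mu}$, where one must confirm that $w_\mu$ is bounded independently of $k$ (it is, since it does not depend on $k$) so that $L^1$-integrability of $|\nabla v|$ suffices.
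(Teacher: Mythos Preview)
Your proposal is correct and follows essentially the same approach as the paper's own proof: Leibniz rule to split into the $\nabla\eta_k$ and $\nabla v$ pieces, Lemma~\ref{lemma:lm_2.4_DS_revisitedFIRST_PART} for the first, dominated convergence plus the fractional product rule \eqref{pl} for the second, and then Lemmas~\ref{lemma:lm2.2_DS_revisited} and~\ref{psi_lm} (with the same $-w(x)+2w(x)=w(x)$ computation) to conclude. The only cosmetic difference is that the paper does not explicitly spell out the $B_\eps(x)$ versus $\Omega\setminus B_\eps(x)$ split in this proof, but that is implicit in its invocation of the two lemmas.
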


From \cite[Theorem 1.3]{DjitteFallWeth}, choosing the vector field $\mathcal X:=id -\xi$, we have
\begin{lemma} Let $u,v\in \mathcal H^s_0(\Omega)$. Assume that $(-\Delta)^su,\,(-\Delta)^sv\in L^\infty(\Omega)$ if $2s>1$ and $(-\Delta)^su,\,(-\Delta)^sv\in C^\alpha_{loc}(\Omega)\cap L^\infty(\Omega)$ with $\alpha>1-2s$ if $2s\leq 1$.  Then for every $\xi\in\mathbb R^N$ there holds
\begin{align}\label{eq:pohozaev.1}
\int_\Omega\langle \cdot-\xi,\nabla u\rangle(-\Delta)^{s}v\,dx+\int_\Omega \langle \cdot-\xi,\nabla v\rangle(-\Delta)^{s}u\,dx=(2s-N)\int_{\Omega}u(-\Delta)^{s}vdx-\Gamma(1+s)^{2}\int_{\partial\Omega}\frac{u}{\delta^{s}}\frac{v}{\delta^{s}}\langle \cdot-\xi,\nu\rangle\,d\sigma.
\end{align}
where $\nu$ is the outward unit normal to $\partial\Omega$ and $\delta:=dist (\ \cdot\ , \mathbb R^N\setminus\Omega)$.
\end{lemma}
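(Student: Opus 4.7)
The plan is to derive \eqref{eq:pohozaev.1} as a direct specialization of the general integration-by-parts formula \eqref{poho_DFW} to the globally Lipschitz vector field $\mathcal X(z) := z - \xi$. With this choice both $\operatorname{div}\mathcal X$ and the ``antisymmetric'' ingredient in the kernel $K_{\mathcal X}$ simplify dramatically, and I expect the bilinear form $\mathcal E_{\mathcal X}(u,v)$ to collapse to a scalar multiple of the standard fractional Dirichlet pairing.

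The key computation is the kernel. Since $\operatorname{div}\mathcal X(z) = N$ for every $z$ and $\mathcal X(z)-\mathcal X(t)= z-t$, the bracket in the definition of $K_{\mathcal X}$ reduces to
\begin{equation*}
2N - (N+2s)\frac{(z-t)\cdot (z-t)}{|z-t|^2} \;=\; 2N - (N+2s) \;=\; N - 2s,
\end{equation*}
so that $K_{\mathcal X}(z,t) = \tfrac{c_{N,s}(N-2s)}{2}|z-t|^{-(N+2s)}$. Plugging this back into the definition of $\mathcal E_{\mathcal X}$ and recognizing the standard representation
\begin{equation*}
\int_\Omega u (-\Delta)^s v \, dx \;=\; \frac{c_{N,s}}{2}\iint_{\R^N\times \R^N} \frac{(u(z)-u(t))(v(z)-v(t))}{|z-t|^{N+2s}}\, dz\, dt,
\end{equation*}
which is valid for $u,v \in \mathcal H^s_0(\Omega)$ under the regularity assumed on $(-\Delta)^s u,(-\Delta)^s v$, one obtains
\begin{equation*}
\mathcal E_{\mathcal X}(u,v) \;=\; (N-2s)\int_\Omega u(-\Delta)^s v \, dx.
\end{equation*}
Substituting this identity together with $\mathcal X = \operatorname{id} - \xi$ into \eqref{poho_DFW}, and using $-(N-2s) = 2s-N$, produces exactly \eqref{eq:pohozaev.1}.

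The main obstacle is to certify that \eqref{poho_DFW} may be legitimately invoked for this particular non-compactly supported $\mathcal X$ and for the prescribed regularity class. The dichotomy between the regimes $2s > 1$ and $2s \le 1$ in the hypotheses, namely $(-\Delta)^s u,(-\Delta)^s v \in L^\infty(\Omega)$ versus $(-\Delta)^s u,(-\Delta)^s v \in C^\alpha_{loc}(\Omega)\cap L^\infty(\Omega)$ with $\alpha>1-2s$, is precisely what is needed in \cite[Theorem 1.3]{DjitteFallWeth} to ensure that $\langle z-\xi,\nabla u\rangle(-\Delta)^s v$ is integrable near $\partial\Omega$ and that the boundary quotients $\frac{u}{\delta^s},\frac{v}{\delta^s}$ admit traces on $\partial\Omega$ making the right-hand side well defined; once these technical prerequisites are in place, the reduction above is purely algebraic.
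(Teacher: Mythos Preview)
Your proposal is correct and is exactly the approach the paper takes: the lemma is stated as an immediate consequence of \cite[Theorem~1.3]{DjitteFallWeth} upon choosing the vector field $\mathcal X = \mathrm{id} - \xi$, and your computation of $K_{\mathcal X}$ and $\mathcal E_{\mathcal X}(u,v)$ is the algebraic content behind that one-line citation. In fact you supply more detail than the paper itself, which merely writes ``From \cite[Theorem~1.3]{DjitteFallWeth}, choosing the vector field $\mathcal X := \mathrm{id} - \xi$, we have'' and then records \eqref{eq:pohozaev.1}.
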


\section{The proof of Theorem \ref{MainTheoremBilinear}}
We apply the Pohozaev identity \eqref{eq:pohozaev.1} to the regular functions $u(z)= \psi_{\gamma,y}(z)G_s(y,z)\eta_k(z)$ and $v(z)=\psi_{\mu,x}(z)G_s(x,z)\eta_k(z)$ with $x\neq y$ to get, for any fixed $\xi\in\mathbb R^N$ 
\begin{align} 
\nonumber
(2s-N)\int_{\Omega}\psi_{\gamma,y}G_s(y,\cdot\,)\eta_k(-\Delta)^{s}\Big(\psi_{\mu,x}G_s(x,\cdot\,)\eta_k\Big)\,dz
&=\int_\Omega\langle \,\cdot -\xi,\nabla \Big(\psi_{\gamma,y}G_s(y,\cdot\,)\eta_k\Big)\rangle(-\Delta)^{s}\Big(\psi_{\mu,x}G_s(x, \cdot\, )\eta_k\Big)\,dz\\
& + \int_\Omega \langle \,\cdot -\xi,\nabla \Big(\psi_{\mu,x}G_s(x,\cdot\,)\eta_k\Big)\rangle(-\Delta)^{s}\Big(\psi_{\gamma,y}G_s(y,\cdot\,)\eta_k\Big)\,dz.
\label{startingEquality}
\end{align}
Notice that there is no boundary term, since the functions $u$ and $v$ have compact support in $\Omega$. We pass to the limit as $k\rightarrow +\infty$, $\mu\rightarrow 0^+$, $\gamma\rightarrow 0^+$ on both sides of \eqref{startingEquality}. The proof is divided into 4 steps. We stress that, while \emph{Steps 1-2} hold for any $s\in (0,1)$ and any $\xi\in\mathbb R^N$, in \emph{Steps 3-4} we assume that $\xi=x$ when $s\leq \frac{1}{2}$. This is due to the integrability properties of $\nabla G_s(x,\cdot)$ (see Remark \ref{remark:IntegrabilityNablaG}).
\\\\
\emph{Step 1: We pass to the limit in the left hand side of \eqref{startingEquality}, showing that
\begin{equation}
\label{LHS_computation}
\lim_{\gamma\rightarrow 0^+}\lim_{\mu\rightarrow 0^+}\lim_{k\to+\infty}\Big(\mbox{LHS of \eqref{startingEquality}}\Big)=
(2s-N)G_s(y,x).\end{equation}}
By using the product rule \eqref{pl} we have 
\begin{equation}\label{psi-G-estim}\mbox{LHS of \eqref{startingEquality}}= (2s-N)A_{k,\mu,\gamma}(x,y) + (2s-N)B_{k,\mu,\gamma}(x,y)\end{equation}
where 
\begin{align*} 
A_{k,\mu,\gamma}(x,y)&:=\int_{\Omega}\psi_{\gamma,y}G_s(y,\cdot\,)\eta_k\Bigg(\psi_{\mu,x}G_s(x,\cdot\,)(-\Delta)^s\eta_k-\mathcal I_s[\eta_k,\psi_{\mu,x}G_s(x,\cdot\,)]\Bigg)dz;
\\
B_{k,\mu,\gamma}(x,y)&:=\int_{\Omega}\psi_{\gamma,y}G_s(y,\cdot\,)\eta_k^2(-\Delta)^{s}\Big(\psi_{\mu,x}G_s(x,\cdot\,)\Big)\,dz.\end{align*}
By the regularity results in \eqref{Regularity_PsiG}, we can apply Lemma \ref{lemma:similarFHFA}, with $w=\psi_{\gamma,y}G_s(y,\cdot\,)$, and $v=\psi_{\mu,x}G_s(x,\cdot\,)$, from which we deduce that, 
for any $\mu,\gamma>0$ fixed,  $\lim_{k\to+\infty}A_{k,\mu,\gamma}(x,y)=0.$
As a consequence one gets
\begin{align}\label{CovergenceFirstTermTOT}
\lim_{\gamma\rightarrow 0^+}\lim_{\mu\rightarrow 0^+}\lim_{k\to+\infty}A_{k,\mu,\gamma}(x,y)=0.
\end{align}
Furthermore, since $(-\Delta)^s\Big(\psi_{\mu,x}G_s(x,\cdot)\Big)\in L^{\infty}(\Omega)$ (see \eqref{fracLappsiGBounded}), by the dominated converge theorem we obtain
\begin{align}\label{firstLimitktobeused}
\lim_{k\to+\infty}B_{k,\mu,\gamma}(x,y)=\int_{\Omega}\psi_{\gamma,y}G_s(y,\cdot\,)(-\Delta)^{s}\Big(\psi_{\mu,x}G_s(x,\cdot\,)\Big)\,dz.
\end{align}
Using the product rule \eqref{pl}, and recalling that $\psi_{\mu,x}(-\Delta)^{s}G_s(x,\cdot\,)\equiv 0$, we have
\[
\int_{\Omega}\psi_{\gamma,y}G_s(y,\cdot\,)(-\Delta)^{s}\Big(\psi_{\mu,x}G_s(x,\cdot\,)\Big)\,dz
=\int_{\Omega}\psi_{\gamma,y}G_s(y,\cdot\,)\Bigg(G_s(x,\cdot\,)(-\Delta)^{s}\psi_{\mu,x}-\mathcal I_s[\psi_{\mu,x},G_s(x,\cdot\,)]\Bigg)\,dz.
\]
Next, we  pass to the limit as $\mu\rightarrow 0^+$, using Lemma \ref{lemma:lm2.2_DS_revisited} and Lemma \ref{psi_lm},  with $w=\psi_{\gamma,y}G_s(y,\cdot)$, getting
\begin{align*}
\lim_{\mu\to0^+}\int_{\Omega}\psi_{\gamma,y}G_s(y,\cdot\,)(-\Delta)^{s}\Big(\psi_{\mu,x}G_s(x,\cdot\,)\Big)\,dz&=\lim_{\mu\rightarrow 0^+} \int_{\Omega}\psi_{\gamma,y}G_s(y,\cdot\,)\Bigg(G_s(x,\cdot\,)(-\Delta)^{s}\psi_{\mu,x}-\mathcal I_s[\psi_{\mu,x},G_s(x,\cdot\,)]\Bigg)\,dz\nonumber\\
 &= \lim_{\mu\rightarrow 0^+} \int_{B_{\eps}(x)}\psi_{\gamma,y}G_s(y,\cdot\,)\Bigg(G_s(x,\cdot\,)(-\Delta)^{s}\psi_{\mu,x}-\mathcal I_s[\psi_{\mu,x},G_s(x,\cdot\,)]\Bigg)\,dz\nonumber\\
& +\lim_{\mu\rightarrow 0^+} \int_{\Omega\setminus B_\eps (x)}\psi_{\gamma,y}G_s(y,\cdot\,)\Bigg(G_s(x,\cdot\,)(-\Delta)^{s}\psi_{\mu,x}-\mathcal I_s[\psi_{\mu,x},G_s(x,\cdot\,)]\Bigg)\,dz\nonumber\\
 &= \psi_{\gamma,y}(x)G_s(y,x).
 \end{align*}
Passing to the limit as $\gamma\rightarrow 0^+$, we obtain
\[ \lim_{\gamma\to0^+}\lim_{\mu\to0^+}\int_{\Omega}\psi_{\gamma,y}G_s(y,\cdot\,)(-\Delta)^{s}\Big(\psi_{\mu,x}G_s(x,\cdot\,)\Big)\,dz=G_s(y,x),
\]
namely, by \eqref{firstLimitktobeused}
\begin{equation} \label{limitBgivedG}
\lim_{\gamma\to0^+}\lim_{\mu\to0^+}\lim_{k\to+\infty}B_{k,\mu,\gamma}(x,y) =G_s(y,x).
\end{equation}
In conclusion, \eqref{LHS_computation} follows from \eqref{limitBgivedG},  \eqref{CovergenceFirstTermTOT} and \eqref{psi-G-estim}, and this ends the proof of \emph{Step 1}.
\\
\\
\emph{Step 2: Let $s\in(0,1)$,  $\xi\in\mathbb R^N.$ We pass to the limit in the right hand side of \eqref{startingEquality}, showing that
\begin{align}
\label{RHS_computation}
\lim_{\gamma\rightarrow 0^+}\lim_{\mu\rightarrow 0^+}\lim_{k\to+\infty}\Big(\mbox{RHS of \eqref{startingEquality}}\Big)
=&
\, \langle x -\xi, \nabla_x G_s(y,x)\rangle 
 +\Gamma^2(1+s)  \int_{\partial\Omega} \frac{G_s(y,\cdot\,)}{\delta^s_{\Omega}}\frac{G_s(x,\cdot)}{\delta^s_{\Omega}}\, \langle\,\cdot -\xi, \nu\rangle \, d\sigma\nonumber\\
 +& \lim_{\gamma\rightarrow 0^+}\lim_{\mu\rightarrow 0^+} \int_{\Omega}
\langle\,\cdot -\xi, \nabla \Big( \psi_{\mu,x}G_s(x,\cdot)\Big)\rangle
(-\Delta)^s\Big(\psi_{\gamma,y}G_s(y,\cdot)\Big)dz.
\end{align}}
Using the product rule \eqref{pl} we have
\begin{equation}\label{RHSExpression}\mbox{RHS of \eqref{startingEquality}}= C_{k,\mu,\gamma}(x,y) + D_{k,\mu,\gamma}(x,y)+E_{k,\mu,\gamma}(x,y)\end{equation}
where 
\begin{align*} 
C_{k,\mu,\gamma}(x,y)&:=\int_{\Omega}
\langle \,\cdot -\xi,\nabla \Big(\psi_{\gamma,y}G_s(y,\cdot\,)\eta_k\Big)\rangle
\Bigg(\psi_{\mu,x}G_s(x,\cdot)(-\Delta)^s\eta_k-\mathcal I_s\big[\eta_k, G_s(x,\cdot)\psi_{\mu,x}\big]\Bigg)dz+
\\
&+\int_{\Omega}\langle \,\cdot -\xi,\nabla \Big(\psi_{\mu,x}G_s(x,\cdot\,)\eta_k\Big)\rangle
\Bigg(\psi_{\gamma,y}G_s(y,\cdot)(-\Delta)^s\eta_k-\mathcal I_s\big[\eta_k, G_s(y,\cdot)\psi_{\gamma,y}\big]\Bigg)dz;
\\
D_{k,\mu,\gamma}(x,y)&:=\int_{\Omega}\eta_k\langle\,\cdot -\xi, \nabla \Big(\eta_k \psi_{\gamma,y}G_s(y,\cdot)\Big)\rangle (-\Delta)^s\Big(\psi_{\mu,x}G_s(x,\cdot)\Big)dz;
\\
E_{k,\mu,\gamma}(x,y)&:=\int_{\Omega}\eta_k\langle\,\cdot -\xi, \nabla \Big(\eta_k \psi_{\mu,x}G_s(x,\cdot)\Big)\rangle (-\Delta)^s\Big(\psi_{\gamma,y}G_s(y,\cdot)\Big)dz.\end{align*}
We apply Lemma \ref{lemma_2.3_Djitte_Sueur_corrected} with  $\mathcal X=\cdot -\xi$, and $w=\psi_{\gamma,y}G_s(y,\cdot\,)$, $v=\psi_{\mu,x}G_s(x,\cdot)$ ($w,v$ satisfy all the regularity assumptions required, see \eqref{Regularity_PsiG}), 
to get
\[\lim_{k\rightarrow +\infty}C_{k,\mu,\gamma}(x,y)=\Gamma^2(1+s)  \int_{\partial\Omega} \frac{\psi_{\gamma,y}G_s(y,\cdot\,)}{\delta^s_{\Omega}}\frac{\psi_{\mu,x}G_s(x,\cdot)}{\delta^s_{\Omega}}\, \langle\,\cdot -\xi, \nu\rangle \, d\sigma.\]
Then we can pass to the limit in $\mu\rightarrow 0^+$ and $\gamma\rightarrow 0^+$, by the Lebesgue dominated convergence theorem, since both $x$ and $y$ are in the interior of $\Omega$, so
\begin{equation}\label{Atermlim}\lim_{\gamma\rightarrow 0^+}\lim_{\mu\rightarrow 0^+}\lim_{k\rightarrow +\infty}C_{k,\mu,\gamma}(x,y)=\Gamma^2(1+s)  \int_{\partial\Omega} \frac{G_s(y,\cdot\,)}{\delta^s_{\Omega}}\frac{G_s(x,\cdot)}{\delta^s_{\Omega}}\, \langle\,\cdot -\xi, \nu\rangle \, d\sigma.\end{equation}
Furthermore, we apply Lemma \ref{lemma:lm_2.4_DS_revisited} with $\mathcal X=\cdot -\xi$, and $v=\psi_{\gamma,y}G_s(y,\cdot)$ ($v$ satisfies the required assumptions, see \eqref{Regularity_PsiG}-\eqref{eq:nablaPsiGL1}), to get
\[\lim_{\mu\rightarrow 0^+}\lim_{k\rightarrow +\infty}D_{k,\mu,\gamma}(x,y)= \langle x -\xi, \nabla_x \Big(\psi_{\gamma,y}(x)G_s(y,x)\Big)\rangle= G_s(y,x)\langle x -\xi, \nabla \psi_{\gamma,y}(x)\rangle+ \psi_{\gamma,y}(x) \langle x -\xi, \nabla_x G_s(y,x)\rangle.\]
Hence, passing to the limit in $\gamma\rightarrow 0^+$, since for $\gamma>0$ sufficiently small $\psi_{\gamma,y}(x) = 1$ and $\nabla \psi_{\gamma,y}(x) = 0$, we have
\begin{equation}\label{Btermlim}\lim_{\gamma\rightarrow 0^+}\lim_{\mu\rightarrow 0^+}\lim_{k\rightarrow +\infty}D_{k,\mu,\gamma}(x,y)= \langle x -\xi, \nabla_x G_s(y,x)\rangle.\end{equation}
Finally, we consider the term $E_{k,\mu,\gamma}(x,y)$. By Leibniz rule, we split it as 
 \begin{equation}\label{dedede}
 E_{k,\mu,\gamma}(x,y) =E^1_{k,\mu,\gamma}(x,y)+E^2_{k,\mu,\gamma}(x,y) , 
 \end{equation}
 where 
\begin{align*} 
E^1_{k,\mu,\gamma}(x,y) &:= \int_{\Omega}\eta_k \psi_{\mu,x}G_s(x,\cdot)\langle \cdot-\xi,\nabla\eta_k\rangle (-\Delta)^s\Big(\psi_{\gamma,y}G_s(y,\cdot)\Big)dz, \nonumber\\
E^2_{k,\mu,\gamma}(x,y) &:= 
\int_{\Omega}\eta_k^2
\langle\,\cdot -\xi, \nabla \Big( \psi_{\mu,x}G_s(x,\cdot)\Big)\rangle
(-\Delta)^s\Big(\psi_{\gamma,y}G_s(y,\cdot)\Big)dz .
\end{align*}
Using Lemma \ref{lemma:lm_2.4_DS_revisitedFIRST_PART}  with  $v=\psi_{\mu,x}G_s(x,\cdot)$, $w=(-\Delta)^s\Big(\psi_{\gamma,y}G_s(y,\cdot)\Big)$ (the assumptions on $v$ and $w$ are satisfied thanks to  \eqref{fracLappsiGBounded}-\eqref{Regularity_PsiG}), and $\mathcal X=\cdot -\xi$, we  deduce that
$
\lim_{k\to+\infty} E^1_{k,\mu,\gamma}(x,y)=0
$, for any $\mu,\gamma>0$ small,
hence
\begin{equation}\label{limC1}
  \lim_{\gamma\to 0^+}\lim_{\mu\to 0^+}\lim_{k\to+\infty}E^1_{k,\mu,\gamma}(x,y)=0.
\end{equation}
For the term $E^2_{k,\mu,\gamma}(x,y)$, 
we first pass to the limit as $k\rightarrow +\infty$, using the dominated convergence theorem. Indeed  $ \nabla \Big( \psi_{\mu,x}G_s(x,\cdot)\Big)\in L^1(\Omega)$ (see \eqref{eq:nablaPsiGL1}), and  $|\cdot-\xi|(-\Delta)^s\Big(\psi_{\gamma,y}G_s(y,\cdot)\Big)\in L^{\infty}(\Omega)$ (see \eqref{fracLappsiGBounded}), hence 
\begin{equation}\label{intermediateLimitmk0}
\lim_{k\rightarrow +\infty}E^2_{k,\mu,\gamma}(x,y)=\int_{\Omega}
\langle\,\cdot -\xi, \nabla \Big( \psi_{\mu,x}G_s(x,\cdot)\Big)\rangle
(-\Delta)^s\Big(\psi_{\gamma,y}G_s(y,\cdot)\Big)dz. \end{equation}
From \eqref{intermediateLimitmk0}-\eqref{limC1}-\eqref{dedede}-\eqref{Btermlim}-\eqref{Atermlim}-\eqref{RHSExpression} we deduce \eqref{RHS_computation}, and this concludes the proof of \emph{Step 2.}\\
\\
\emph{Step 3. Let $s\in (0,1)$. Let us fix $\xi:=x$ when $s\leq \frac{1}{2}$. We show that
\begin{equation}\label{Ctermlim}\lim_{\gamma\rightarrow 0^+}\lim_{\mu\rightarrow 0^+} \int_{\Omega}
\langle\,\cdot -\xi, \nabla \Big( \psi_{\mu,x}G_s(x,\cdot)\Big)\rangle
(-\Delta)^s\Big(\psi_{\gamma,y}G_s(y,\cdot)\Big)dz= \langle y -\xi, \nabla_y G_s(x,y)\rangle.\end{equation}
}
By the Leibniz rule we get
\begin{align}
\label{intermediateLimitmuA}
\nonumber\int_{\Omega}
\langle\,\cdot -\xi, \nabla \Big( \psi_{\mu,x}G_s(x,\cdot)\Big)\rangle
(-\Delta)^s\Big(\psi_{\gamma,y}G_s(y,\cdot)\Big)dz&=
\int_{\Omega} \psi_{\mu,x}
\langle\,\cdot -\xi, \nabla G_s(x,\cdot)\rangle
(-\Delta)^s\Big(\psi_{\gamma,y}G_s(y,\cdot)\Big)dz
\\
&+\int_{\Omega} G_s(x,\cdot)
\langle\,\cdot -\xi, \nabla \psi_{\mu,x}\rangle
(-\Delta)^s\Big(\psi_{\gamma,y}G_s(y,\cdot)\Big)dz.
\end{align}
We first pass to the limit as $\mu\rightarrow 0^+$ in the two terms separately.
For the first term we can pass to the limit as $\mu\rightarrow 0^+$ by the dominated convergence theorem. Indeed,  $(-\Delta)^s\Big(\psi_{\gamma,y}G_s(y,\cdot)\Big)\in L^{\infty}(\Omega)$ (see \eqref{fracLappsiGBounded})
and, under our assumption on $s$ and $\xi$, the function $\langle  \cdot-\xi , \nabla G_s(x,\cdot)\rangle\in L^1(\Omega)$,  
as already observed in Remark \ref{remark:IntegrabilityNablaG} (see \eqref{eq:GradGL1} and \eqref{eq:FieldGradGL^1}). 
Hence
\begin{equation}\label{intermediateLimitmuB}\lim_{\mu\rightarrow 0^+}\int_{\Omega} \psi_{\mu,x}
\langle\,\cdot -\xi, \nabla G_s(x,\cdot)\rangle
(-\Delta)^s\Big(\psi_{\gamma,y}G_s(y,\cdot)\Big)dz=\int_{\Omega} 
\langle\,\cdot -\xi, \nabla G_s(x,\cdot)\rangle
(-\Delta)^s\Big(\psi_{\gamma,y}G_s(y,\cdot)\Big)dz\end{equation}
For the second term we compute $\nabla\psi_{\mu,x}$, and use again that $(-\Delta)^s\Big(\psi_{\gamma,y}G_s(y,\cdot)\Big)\in L^{\infty}(\Omega)$, getting
\begin{align}\nonumber
\left|\int_{\Omega} G_s(x,\cdot)
\langle\,\cdot -\xi, \nabla \psi_{\mu,x}\rangle
(-\Delta)^s\Big(\psi_{\gamma,y}G_s(y,\cdot)\Big)dz\right|
&\leq C(\gamma,y)  
\int_{A_\mu(x)} G_s(x,\cdot)
\frac{8}{\delta^2_\Omega(x)}\frac{|\xi-\cdot|\, |x-\cdot|}{\mu^2}
\left| 
\rho'\Big(\frac{8}{\delta^2_\Omega(x)}\frac{|x-\cdot|^2}{\mu^2}\Big)\right|\,dz\\\nonumber
&\leq C(\gamma,y)  
\int_{A_\mu(x)} G_s(x,\cdot)\frac{|\xi-\cdot|}{|x-\cdot|}
\,dz,
\\\label{intermediateLimitmuC}
&\leq  C(\gamma,y)  \Lambda_{N,s}
\int_{A_\mu(x)} \frac{|\xi-\cdot|}{|x-\cdot|^{N-2s+1}}
\,dz\longrightarrow 0\quad \mbox{ as }\mu\rightarrow 0^+,
\end{align}
where we have used that $\frac{8}{\delta^2_\Omega(x)}\frac{|x-z|^2}{\mu^2}\leq 2$ for $z\in A_\mu(x)$, with $A_\mu(x):=\{z\in\Omega\, :\, \mu\frac{\delta(x)}{2\sqrt 2}\leq|z-x|\leq \mu\frac{\delta(x)}{2}\}$ and, in the last inequality, also the estimate on $G_s(x,\cdot)$ in \eqref{greenfunctEstimate}. We stress that the convergence in \eqref{intermediateLimitmuC}  holds under our assumption on $s$ and $\xi$.
Substituting \eqref{intermediateLimitmuC} and \eqref{intermediateLimitmuB} into \eqref{intermediateLimitmuA} we get 
\begin{equation}
    \label{intermediateLimitmu}
  \lim_{\mu\rightarrow 0^+}\int_{\Omega}
\langle\,\cdot -\xi, \nabla \Big( \psi_{\mu,x}G_s(x,\cdot)\Big)\rangle
(-\Delta)^s\Big(\psi_{\gamma,y}G_s(y,\cdot)\Big)dz=\int_{\Omega}
\langle\,\cdot -\xi, \nabla G_s(x,\cdot)\rangle
(-\Delta)^s\Big(\psi_{\gamma,y}G_s(y,\cdot)\Big)dz. 
\end{equation}
Next, we apply the fractional product law \eqref{pl} to the right hand side of \eqref{intermediateLimitmu}, using that  $\psi_{\gamma,y}(-\Delta)^s G_s(y,\cdot)=0$, and then pass to the limit as $\gamma\rightarrow 0^+$, and get \eqref{Ctermlim}:
\begin{align}\label{quasiQuasiLi}
\lim_{\gamma\rightarrow 0^+}\int_{\Omega}
\langle\,\cdot -\xi, \nabla G_s(x,\cdot)\rangle
(-\Delta)^s\Big(\psi_{\gamma,y}G_s(y,\cdot)\Big)dz&= \lim_{\gamma\rightarrow 0^+}
\int_{\Omega}
\langle\,\cdot -\xi, \nabla G_s(x,\cdot)\rangle
 \Bigg(G_s(y,\cdot)(-\Delta)^s\psi_{\gamma,y}-\mathcal I_s\big[G_s(y,\cdot),\psi_{\gamma,y}\big]\Bigg)dz\nonumber\\
 &= \lim_{\gamma\rightarrow 0^+}
\int_{B_{\varepsilon}(y)}
\langle\,\cdot -\xi, \nabla G_s(x,\cdot)\rangle
 \Bigg(G_s(y,\cdot)(-\Delta)^s\psi_{\gamma,y}-\mathcal I_s\big[G_s(y,\cdot),\psi_{\gamma,y}\big]\Bigg)dz\nonumber\\
& +\lim_{\gamma\rightarrow 0^+}
\int_{\Omega\setminus B_{\varepsilon}(y)}
\langle\,\cdot -\xi, \nabla G_s(x,\cdot)\rangle
 \Bigg(G_s(y,\cdot)(-\Delta)^s\psi_{\gamma,y}-\mathcal I_s\big[G_s(y,\cdot),\psi_{\gamma,y}\big]\Bigg)dz\nonumber\\
 &= \langle  y-\xi , \nabla_y G_s(x,y)\rangle.
 \end{align}
Notice that in \eqref{quasiQuasiLi} we passed to the limit using Lemma \ref{lemma:lm2.2_DS_revisited} and Lemma \ref{psi_lm}  with $w=\langle  \cdot-\xi , \nabla G_s(x,\cdot)\rangle$. Obviously $w\in  C(\overline{B_{\varepsilon}(y)})$. Furthermore, our restriction either on $s$ or on $\xi$, ensures that $w\in L^1(\Omega\setminus B_{\varepsilon}(y))$. Hence $w$ satisfies the assumptions of Lemma \ref{lemma:lm2.2_DS_revisited} and Lemma \ref{psi_lm}.\\
\\
\emph{Step 4. Conclusion of the proof.}\\
Let $s\in (0,1)$. Let us fix $\xi:=x$ when $s\leq \frac{1}{2}$. By \emph{Step  2} and \emph{Step  3}, we can pass to the limits in the right hand side of \eqref{startingEquality}, and conclude that
\begin{align*}
\lim_{\gamma\rightarrow 0^+}\lim_{\mu\rightarrow 0^+}\lim_{k\to+\infty}\Big(\mbox{RHS of \eqref{startingEquality}}\Big)
&= \Gamma^2(1+s)  \int_{\partial\Omega} \frac{G_s(y,\cdot\,)}{\delta^s_{\Omega}}\frac{G_s(x,\cdot)}{\delta^s_{\Omega}}\, \langle\,\cdot -\xi, \nu\rangle \, d\sigma\\
&+ \langle x -\xi, \nabla_x G_s(y,x)\rangle
+  \langle y -\xi, \nabla_y G_s(x,y)\rangle
.
\end{align*}

As a consequence, by \emph{Step 1}, we have
\begin{align}\nonumber
-(N-2s)G_s(y,x)
&= \Gamma^2(1+s)  \int_{\partial\Omega} \frac{G_s(y,\cdot\,)}{\delta^s_{\Omega}}\frac{G_s(x,\cdot)}{\delta^s_{\Omega}}\, \langle\,\cdot -\xi, \nu\rangle \, d\sigma +\langle x -\xi, \nabla_x G_s(y,x)\rangle
+  \langle y -\xi, \nabla_y G_s(x,y)\rangle
.\\\nonumber
&= \Gamma^2(1+s)  \int_{\partial\Omega} \frac{G_s(y,\cdot\,)}{\delta^s_{\Omega}}\frac{G_s(x,\cdot)}{\delta^s_{\Omega}}\, \langle\,\cdot -\xi, \nu\rangle \, d\sigma -\langle x -\xi, \nabla_x H_s(y,x)\rangle
-  \langle y -\xi, \nabla_y H_s(x,y)\rangle\\\label{OneBeforeTheLast}
&+ \langle x -\xi, \nabla_x F_s(y,x)\rangle
+  \langle y -\xi, \nabla_y F_s(x,y)\rangle,
\end{align}
where we have split  the fractional Green function as in \eqref{Eq-spliting-of-Green}.  We claim that 
\begin{equation}\label{finalCLAIM}
\langle x -\xi, \nabla_x F_s(y,x)\rangle
+  \langle y -\xi, \nabla_y F_s(x,y)\rangle= -(N-2s)F_s(x,y).
\end{equation}
By \eqref{funda} and a direct computation, we have
\[
\nabla_x F_s(y,x)=\frac{-(N-2s)b_{N,s}(x-y)}{|x-y|^{N-2s+2}}=-\nabla_y F_s(x,y),
\]
hence \[\langle x-y, \nabla_x F_s(y,x)\rangle= (2s-N)F_s(x,y),\]
and the claim \eqref{finalCLAIM} easily follows:
\begin{align*}
\langle x -\xi, \nabla_x F_s(y,x)\rangle
+  \langle y -\xi, \nabla_y F_s(x,y)\rangle&= \langle x-y, \nabla_x F_s(y,x)\rangle+\langle y -\xi, \nabla_x F_s(y,x)\rangle + \langle y -\xi, \nabla_y F_s(x,y)\rangle\\
&= \langle x-y, \nabla_x F_s(y,x)\rangle\\
&=(2s-N)F_s(x,y).
\end{align*}
Substituting  \eqref{finalCLAIM} into \eqref{OneBeforeTheLast} we deduce 
\begin{align}\nonumber
(N-2s)\big(F_s(x,y)-G_s(y,x)\big)
&= \Gamma^2(1+s)  \int_{\partial\Omega} \frac{G_s(y,\cdot\,)}{\delta^s_{\Omega}}\frac{G_s(x,\cdot)}{\delta^s_{\Omega}}\, \langle\,\cdot -\xi, \nu\rangle \, d\sigma -\langle x -\xi, \nabla_x H_s(y,x)\rangle
-  \langle y -\xi, \nabla_y H_s(x,y)\rangle.
\end{align}
Recalling the splitting \eqref{Eq-spliting-of-Green},  and that we are considering any $\xi\in\mathbb R^N$ when $s>\frac{1}{2}$, and $\xi=x$ when $s\leq \frac{1}{2}$, we derive both
\eqref{bilinearPohozaevGreenGENERAL}  when $s>\frac{1}{2}$, and \eqref{bilinearPohozaevGreen} for $s\leq \frac{1}{2}$. Finally taking $\xi=x$ in \eqref{bilinearPohozaevGreenGENERAL}, we obtain \eqref{bilinearPohozaevGreen} in the full range $s\in (0,1)$.\\
\\
\begin{remark}
When $s>\frac{1}{2}$,
taking \eqref{bilinearPohozaevGreenGENERAL} with $\xi=y$, and $\xi=x$, respectively, and subtracting term by term, we deduce in particular that
\[\Gamma^2(1+s)\int_{\partial\Omega}\frac{ G_s(x,\cdot)}{\delta^s}\frac{G_s(y,\cdot)}{\delta^s} \langle x-y,\nu\rangle \, d\sigma=
   \langle \nabla_x  H_s(y,x)+ \nabla_y  H_s(x,y) ,x-y\rangle.
\]
$\,$\\
\end{remark}
We end the section with the 
\begin{proof}[Proof of Theorem \ref{MainTheorem}]
It is enough to observe that, fixing $x\in\Omega$ in \eqref{bilinearPohozaevGreen}, and  letting $y$ go to $x$, \eqref{PohozaevGreen} follows. 
\end{proof}

\section{The local case $s=1$}
Throughout the section, for $x\in\Omega$, we denote by $G_1(x,\cdot)$ the Green function associated to classical Laplacian operator $-\Delta$ in the domain $\Omega$, namely the solution to \[
\left\{ \begin{array}{rcll}
     -\Delta G_1(x,\cdot)&=&\delta_x& \quad\text{in}\quad \mathcal D'(\Omega)\\
    G_1(x,\cdot)&=&0&\quad\text{on}\quad\partial\Omega,
    \end{array} 
    \right.
\]
where $\delta_x$ denotes the Dirac delta distribution at $x$. The Green function $G_1(x,\cdot)$ can be split into
\[
    G_1(x,\cdot)= F_1(x,\cdot)-H_1(x,\cdot),
\]
where 
\[
   F_1(x,\cdot) :=\frac{1}{(N-2)\sigma_N|\cdot - x|^{N-2}}, 
\]
 is the  fundamental solution of $-\Delta$. Here  $\sigma_N:=|\mathbb{S}^{N-1}|$.
As a consequence, 
$H_1(x,\cdot)$ solves the equation
\[
\left\{ \begin{array}{rcll} -\Delta H_1(x,\cdot)&=& 0  &\textrm{in }\Omega, \\ H_1(x,\cdot)&=&F_1(x,\cdot)& 
\textrm{on }\partial\Omega. \end{array}\right. 
\]
We recall the following classical generalized Pohozaev identity, for functions  $u,v\in C^2(\overline\Omega)$, $u=v=0$ on $\partial\Omega$
\begin{align}\label{eq:pohozaevLocal}
\int_\Omega\langle \cdot-\xi,\nabla u\rangle(-\Delta v)\,dx+\int_\Omega \langle \cdot-\xi,\nabla v\rangle(-\Delta u)\,dx
=(2-N)\int_{\Omega}u(-\Delta v)dx-\int_{\partial\Omega}\frac{ \partial u}{\partial\nu}\frac{\partial v}{\partial\nu}\langle \cdot-\xi,\nu\rangle\,d\sigma,
\end{align}
where $\nu$ is the outward unit normal to $\partial\Omega$, and $\xi\in\mathbb R^N$, $N\geq 2$.
\\\\
Taking inspiration from \cite{BrezisPeletier}, for $a\in\R^N$, we define the family of functions 
\begin{equation}\label{def:deltarho}
\delta_{\rho,a}:=\frac{1}{|B_{\rho}(a)|} \chi_{B_\rho(a)}=\frac{N}{\sigma_N}\rho^{-N} \chi_{B_\rho(a)}, \quad \rho>0 
\end{equation}
where $B_\rho(a):=\{z\in\mathbb R^N\,:\, |z-a|<\rho\}$, and $\chi_{B_\rho(a)}$ denotes the characteristic function of the ball $B_\rho(a)$. 
The functions $\delta_{\rho,a}$ converge  to $\delta_a$, the Dirac delta distribution at $a$, in the sense of distributions.\\
\\
Let also $v_{\rho,a}$ be the solution to the equation
\begin{align}\label{eqvrho}
-\Delta v_{\rho,a}=\delta_{\rho,a} \;\text{ in } \mathbb{R}^N,
\end{align}
 such that $v_{\rho,a}\to0$ as $x\to+\infty$. Then we have
\begin{equation}\label{def:vrho}
v_{\rho,a} (z):=\left\{ \begin{array}{ll}
\displaystyle{-\frac{|z-a|^2}{2\sigma_N\rho^N}+\frac{N}{2(N-2)\sigma_N\rho^{N-2}}}& \text{ if } 0<|z-a|<\rho \\
&\\
F_1(a,z) & \text{ if } \rho\leq|z-a|<+\infty.
\end{array}\right. 
\end{equation}
We also let $u_{\rho,a}$ be the solution of the problem
\begin{equation}\label{u_def}
\left\{ \begin{array}{rcll} -\Delta u_{\rho,a}&=& \delta_{\rho,a}  &\textrm{in }\Omega \\ u_{\rho,a}&=& 0&\textrm{on }\partial\Omega. \end{array}\right. 
\end{equation}
Let us observe that for any neighborhood $\omega$ of $\partial\Omega$ which does not contain the point $a$
\begin{equation}\label{uApproxG}u_{\rho,a}\rightarrow G_1(a,\cdot)\mbox{ in }C^k(\omega), \ \forall k\in\mathbb N.\end{equation}

\begin{proof}[Proof of Theorem \ref{Main_local_TheoremBilinear_s=1}]
 We apply the Pohozaev identity \eqref{eq:pohozaevLocal} to the regular functions $u= u_{\rho,x}$ and $v=u_{\rho,y}$, with $x\neq y$, for any fixed $\xi\in\mathbb R^N$, and get 
\begin{equation}\label{Starting_s=1_Equal}
\int_{\partial\Omega}\frac{ \partial u_{\rho,x}}{\partial\nu}\frac{\partial u_{\rho,y}}{\partial\nu}\langle \cdot-\xi,\nu\rangle\,d\sigma=-\int_\Omega \delta_{\rho,y}\langle \cdot-\xi ,\nabla u_{\rho,x}\rangle\,dz-\int_\Omega \delta_{\rho,x}\langle \cdot-\xi,\nabla u_{\rho,y}\rangle\,dz+(2-N)\int_{\Omega}u_{\rho,x} \delta_{\rho,y}dz.
\end{equation}
We pass to the limit as $\rho\rightarrow 0^+$.
For the left hand side, since both $x$ and $y$ do not belong to $\partial\Omega$, by \eqref{uApproxG} we have
\begin{equation}\label{RHSLocal}
\lim_{\rho\to0^+}\int_{\partial\Omega}\frac{ \partial u_{\rho,x}}{\partial\nu}\frac{\partial u_{\rho,y}}{\partial\nu}\langle \cdot-\xi,\nu\rangle\,d\sigma=\int_{\partial\Omega}\frac{ \partial G_1(x,\cdot)}{\partial\nu}\frac{\partial G_1(y,\cdot)}{\partial\nu} \langle \cdot-\xi,\nu\rangle \, d\sigma.
\end{equation}
To evaluate the right hand side of \eqref{Starting_s=1_Equal}, taking inspiration from \cite[Proof of Theorem 4.3]{BrezisPeletier}, we rewrite it as
\begin{equation}\label{rhsDivision}
\mbox{RHS of \eqref{Starting_s=1_Equal}} =  \widetilde A_{\rho} (x,y) + \widetilde  B_\rho(x,y),
\end{equation}
where
\begin{align*}
&\widetilde A_{\rho} (x,y):=-\int_\Omega \delta_{\rho,y}\langle \cdot-\xi,\nabla (u_{\rho,x}-v_{\rho,x})\rangle\,dz-\int_\Omega \delta_{\rho,x}\langle\cdot-\xi,\nabla (u_{\rho,y}
-v_{\rho,y})\rangle\,dz+(2-N)\int_{\Omega}(u_{\rho,x}-v_{\rho,x}) \delta_{\rho,y}\,dz,\\
&\widetilde B_\rho(x,y):=-\int_\Omega \delta_{\rho,y}\langle \cdot-\xi,\nabla v_{\rho,x} \rangle\,dz-\int_\Omega \delta_{\rho,x}\langle\cdot-\xi,\nabla v_{\rho,y}\rangle\,dz+(2-N)\int_{\Omega}v_{\rho,x}\delta_{\rho,y}\,dz.
\end{align*}
First we show that
\begin{equation}\label{Step1Local}\lim_{\rho\rightarrow 0^+}\widetilde B_\rho(x,y)=0.\end{equation}
Let us observe that, since $x\neq y$, by \eqref{def:vrho} it follows that for $\rho>0$ sufficiently small
 \begin{equation}\label{vrho=F}v_{\rho,x}(z)=\frac{1}{(N-2)\sigma_N|z-x|^{N-2}},\qquad \mbox{for }z\in B_\rho(y),\end{equation}
 hence  by direct computations
\begin{equation}\label{ExplComputNablavrho}\nabla v_{\rho,x}(z)=-\frac{1}{\sigma_N}\frac{z-x}{|z-x|^N},\qquad \mbox{for }z\in B_\rho(y).\end{equation}
Using the definition \eqref{def:deltarho} of $\delta_{\rho,y}$, and  \eqref{ExplComputNablavrho}, one has
\begin{align}\label{grad_v_x}
-\int_\Omega \delta_{\rho,y}\langle z-\xi, \nabla v_{\rho,x}\rangle\,dz&=-\frac{N}{\sigma_N}\rho^{-N}\int_{B_{\rho}(y)}\langle z-\xi, \nabla v_{\rho,x}\rangle\,dz
=\frac{N}{\sigma_N^2}\rho^{-N}\int_{B_{\rho}(y)}\frac{\langle z -\xi,  z -x\rangle}{|z -x|^N}\,dz\nonumber\\
& =\frac{N}{\sigma_N^2}\int_{B_{1}(0)}\frac{\langle  \rho t+y-\xi, \rho t+y-x\rangle}{|\rho t+y-x|^N}\,dt\ \longrightarrow \ \frac{1}{\sigma_N}\frac{\langle  y-\xi,  y-x \rangle}{|x-y|^N},
\end{align}
as $\rho\rightarrow 0^+$, where we have changed coordinates $z=\rho t+y$, and then passed to the limit by the dominated convergence theorem. Arguing similarly, one also gets
\begin{equation}\label{grad_v_y}
-\int_\Omega \delta_{\rho,x}\langle z-\xi,\nabla v_{\rho,y}\rangle\,dz\ \longrightarrow\ \frac{1}{\sigma_N}\frac{\langle x-\xi, x-y\rangle}{|x-y|^N}.
\end{equation}
as $\rho\rightarrow 0^+$. Furthermore, by the definition \eqref{def:deltarho} of $\delta_{\rho,y}$ and the observation \eqref{vrho=F}, it also follows
\begin{align}\label{lastTermConver}
(2-N)\int_{\Omega}v_{\rho,x} \delta_{\rho,y}\,dz=-\frac{N}{\sigma_N^2}\rho^{-N}\int_{B_{\rho}(y)}\frac{1}{|z-x|^{N-2}}\,dz
=-\frac{N}{\sigma_N^2}\int_{B_{1}(0)}\frac{1}{|\rho t+y-x|^{N-2}}\,dt\ \longrightarrow -\frac{1}{\sigma_N}\frac{1}{|y-x|^{N-2}},
\end{align}
as $\rho\rightarrow 0^+$, where we could pass to the limit, using the Lebesgue convergence theorem.
By \eqref{lastTermConver}, \eqref{grad_v_y} and \eqref{grad_v_x} we conclude that
\[\lim_{\rho\rightarrow 0^+}\widetilde B_\rho(x,y)=\frac{1}{\sigma_N}\frac{\langle  y-\xi,  y-x \rangle}{|x-y|^N}+ \frac{1}{\sigma_N}\frac{\langle x-\xi, x-y\rangle}{|x-y|^N}-\frac{\langle x-y, x-y\rangle}{\sigma_N}\frac{1}{|y-x|^{N}}=0,\]
where the last equality follows by trivial computation. This ends the proof of \eqref{Step1Local}.\\
\\
Next we show that 
\begin{equation}
\label{limitAtilde}
\lim_{\rho\rightarrow 0^+}\widetilde A_{\rho} (x,y)=(N-2)H_1(x,y)+\langle y-\xi,\nabla_y H_1(x,y)\rangle+\langle x-\xi,\nabla_x H_1(y,x)\rangle
\end{equation}

Let us observe that by \eqref{u_def} and \eqref{eqvrho},  the function  $u_{\rho,x}-v_{\rho,x}$ solves
\[
\left\{ \begin{array}{rcll} -\Delta (u_{\rho,x}-v_{\rho,x})&=& 0  &\textrm{in }\Omega \\ u_{\rho,x}-v_{\rho,x}&=& -v_{\rho,x}&\textrm{on }\partial\Omega. \end{array}\right. 
\]
Furthermore, since $x\not\in\partial\Omega$, by \eqref{def:vrho} it follows that for $\rho>0$ sufficiently small
 \[v_{\rho,x}(z)=F_1(x,z),\qquad \mbox{for }z\in\partial\Omega.\]
As a consequence, for $\rho>0$ sufficiently small
 $u_{\rho,x}-v_{\rho,x}$ solves
\[
\left\{ \begin{array}{rcll} -\Delta (u_{\rho,x}-v_{\rho,x})&=& 0  &\textrm{in }\Omega \\ u_{\rho,x}-v_{\rho,x}&=& -F_1(x,\cdot)&\textrm{on }\partial\Omega, \end{array}\right. 
\]
namely, 
\begin{equation}\label{FondamentalObservation}
u_{\rho,x}-v_{\rho,x}=-H_1(x,\cdot), \quad \mbox{ for }\rho>0 \mbox{ small enough}.
\end{equation}
Using the definition \eqref{def:deltarho} of $\delta_{\rho,y}$, and \eqref{FondamentalObservation}, we deduce
\begin{align}\label{firstTermFinalLimit}
    (2-N)\int_{\Omega}(u_{\rho,x}-v_{\rho,x}) \delta_{\rho,y}\,dz &= (N-2) \frac{N}{\sigma_N}\rho^{-N}\int_{B_\rho (y)} H_1(x,z)\, dz\nonumber\\
    &=(N-2) \frac{N}{\sigma_N}\int_{B_1(0)} H_1(x,\rho t+y)\, dt \ \longrightarrow (N-2)H_1(x,y),
\end{align}
and
\begin{align}\label{secondTermFinalLimit}
-\int_\Omega \delta_{\rho,y}\langle \cdot-\xi,\nabla (u_{\rho,x}-v_{\rho,x})\rangle\,dz&=
\frac{N}{\sigma_N}\rho^{-N}\int_{B_\rho (y)} \langle z-\xi,\nabla_z H_1(x,z)\rangle\, dz
\nonumber\\
    &=\frac{N}{\sigma_N}\int_{B_1(0)} \langle \rho t+y-\xi,\nabla_z H_1(x,\rho t+y)\rangle\, dt\ \longrightarrow \ \langle y-\xi,\nabla_y H_1(x,y)\rangle,
\end{align}
as $\rho\rightarrow 0^+$. 
Similarly to \eqref{secondTermFinalLimit}, we also get
\begin{equation}\label{lastTermFinalLimit}-\int_\Omega \delta_{\rho,x}\langle\cdot-\xi,\nabla (u_{\rho,y}
-v_{\rho,y})\rangle\,dz\ \longrightarrow\ \langle x-\xi,\nabla_x H_1(y,x)\rangle.\end{equation}
By \eqref{lastTermFinalLimit}, \eqref{secondTermFinalLimit} and \eqref{firstTermFinalLimit}, we get \eqref{limitAtilde}.\\\\
Finally, \eqref{limitAtilde}, \eqref{Step1Local}, 
\eqref{rhsDivision} and  \eqref{RHSLocal}, conclude the proof of Theorem \ref{Main_local_TheoremBilinear_s=1}.
\end{proof}
\begin{remark}
    With similar argument one could also prove for  $N\in\mathbb N$, $N > 2$, and $\Omega$ a bounded open set of $\mathbb R^N$ of class $C^{1,1}$, that for $x,y\in\Omega$ with $x\neq y$, it holds
\[
\displaystyle\int_{\partial\Omega}\frac{ \partial G_1(x,\cdot)}{\partial\nu}\frac{\partial G_1(y,\cdot)}{\partial\nu} \nu_i \, d\sigma=  \frac{\partial}{\partial x_i}H_1(y,x) 
+   \frac{\partial}{\partial y_i} H_1(x,y).
\]
This is the counterpart in the case $s=1$ of \cite[Theorem 1.4]{DjitteSueur}.  Furthermore, fixing $x$ and letting $y$ go to $x$, it gives the Pohozaev identity for the classical Green function already obtained in \cite[Theorem 4.4]{BrezisPeletier}.
\end{remark}

\bibliographystyle{NAplain}

\begin{thebibliography}{10}

\bibitem{Baktha_Muk_Santra}
Mousomi Bhakta, Debangana Mukherjee, Sanjiban Santra,
\newblock Profile of solutions for nonlocal equations with critical and supercritical nonlinearities,
\newblock {\em Commun. Contemp. Math.}, {\bf 21}(1):1750099, 35, 2019,
\newblock ISSN 0219-1997,1793-6683,
\newblock \doi{10.1142/S0219199717500997},
\newblock available from: \url{https://doi.org/10.1142/S0219199717500997}. $\,$

\bibitem{BogdanKulczyckiNowak}
K.~Bogdan, T.~Kulczycki, Adam Nowak,
\newblock Gradient estimates for harmonic and {$q$}-harmonic functions of symmetric stable processes,
\newblock {\em Illinois J. Math.}, {\bf 46}(2):541--556, 2002,
\newblock ISSN 0019-2082,1945-6581,
\newblock available from: \url{http://projecteuclid.org/euclid.ijm/1258136210}. $\,$

\bibitem{BogdanJakubowski}
Krzysztof Bogdan, Tomasz Jakubowski,
\newblock Estimates of the {G}reen function for the fractional {L}aplacian perturbed by gradient,
\newblock {\em Potential Anal.}, {\bf 36}(3):455--481, 2012,
\newblock ISSN 0926-2601,1572-929X,
\newblock \doi{10.1007/s11118-011-9237-x},
\newblock available from: \url{https://doi.org/10.1007/s11118-011-9237-x}. $\,$

\bibitem{BrezisPeletier}
Ha\"im Brezis, Lambertus~A. Peletier,
\newblock Asymptotics for elliptic equations involving critical growth,
\newblock in {\em Partial differential equations and the calculus of variations, {V}ol.\ {I}}, Volume~1 of {\em Progr. Nonlinear Differential Equations Appl.}, pp. 149--192,
\newblock Birkh\"auser Boston, Boston, MA, 1989. $\,$

\bibitem{ChenSong}
Zhen-Qing Chen, Renming Song,
\newblock Estimates on {G}reen functions and {P}oisson kernels for symmetric stable processes,
\newblock {\em Math. Ann.}, {\bf 312}(3):465--501, 1998,
\newblock ISSN 0025-5831,1432-1807,
\newblock \doi{10.1007/s002080050232},
\newblock available from: \url{https://doi.org/10.1007/s002080050232}. $\,$

\bibitem{DeMarchisIanniSaldana}
Francesca De~Marchis, Isabella Ianni, Alberto Salda\~na,
\newblock Uniqueness of solutions to slightly subcritical fractional {L}ane-{E}mden problems in general symmetric domains,
\newblock {\em in preparation}. $\,$

\bibitem{DjitteFallWethHadamard}
Sidy~Moctar Djitte, Mouhamed~Moustapha Fall, Tobias Weth,
\newblock A fractional {H}adamard formula and applications,
\newblock {\em Calc. Var. Partial Differential Equations}, {\bf 60}(6):Paper No. 231, 31, 2021,
\newblock ISSN 0944-2669,1432-0835,
\newblock \doi{10.1007/s00526-021-02094-3},
\newblock available from: \url{https://doi.org/10.1007/s00526-021-02094-3}. $\,$

\bibitem{DjitteFallWeth}
Sidy~Moctar Djitte, Mouhamed Moustapha, Tobias Weth,
\newblock A generalized fractional {P}ohozaev identity and applications,
\newblock {\em Adv. Calc. Var.}, {\bf 17}(1):237--253, 2024,
\newblock ISSN 1864-8258,1864-8266,
\newblock \doi{10.1515/acv-2022-0003},
\newblock available from: \url{https://doi.org/10.1515/acv-2022-0003}. $\,$

\bibitem{DjitteSueur}
Sidy~Moctar Djitte, Franck Sueur,
\newblock A few representation formulas for solutions of fractional {L}aplace equations,
\newblock {\em preprint arXiv:2309.09114}. $\,$

\bibitem{Fall_Jarohs_2021}
Mouhamed~Moustapha Fall, Sven Jarohs,
\newblock Gradient estimates in fractional {D}irichlet problems,
\newblock {\em Potential Anal.}, {\bf 54}(4):627--636, 2021,
\newblock ISSN 0926-2601,1572-929X,
\newblock \doi{10.1007/s11118-020-09842-8},
\newblock available from: \url{https://doi.org/10.1007/s11118-020-09842-8}. $\,$

\bibitem{Grossi}
Massimo Grossi,
\newblock A uniqueness result for a semilinear elliptic equation in symmetric domains,
\newblock {\em Adv. Differential Equations}, {\bf 5}(1-3):193--212, 2000,
\newblock ISSN 1079-9389. $\,$

\bibitem{Han}
Zheng-Chao Han,
\newblock Asymptotic approach to singular solutions for nonlinear elliptic equations involving critical {S}obolev exponent,
\newblock {\em Ann. Inst. H. Poincar\'e{} C Anal. Non Lin\'eaire}, {\bf 8}(2):159--174, 1991,
\newblock ISSN 0294-1449,1873-1430,
\newblock \doi{10.1016/S0294-1449(16)30270-0},
\newblock available from: \url{https://doi.org/10.1016/S0294-1449(16)30270-0}. $\,$

\bibitem{Kulczycki}
Tadeusz Kulczycki,
\newblock Properties of {G}reen function of symmetric stable processes,
\newblock {\em Probab. Math. Statist.}, {\bf 17}(2):339--364, 1997,
\newblock ISSN 0208-4147,2300-8113. $\,$

\bibitem{Ros-Oton_Serra_Dirichlet_2014}
Xavier Ros-Oton, Joaquim Serra,
\newblock The {D}irichlet problem for the fractional {L}aplacian: regularity up to the boundary,
\newblock {\em J. Math. Pures Appl. (9)}, {\bf 101}(3):275--302, 2014,
\newblock ISSN 0021-7824,1776-3371,
\newblock \doi{10.1016/j.matpur.2013.06.003},
\newblock available from: \url{https://doi.org/10.1016/j.matpur.2013.06.003}. $\,$

\bibitem{Ros-Oton_Serra_Regularity_2016}
Xavier Ros-Oton, Joaquim Serra,
\newblock Regularity theory for general stable operators,
\newblock {\em J. Differential Equations}, {\bf 260}(12):8675--8715, 2016,
\newblock ISSN 0022-0396,1090-2732,
\newblock \doi{10.1016/j.jde.2016.02.033},
\newblock available from: \url{https://doi.org/10.1016/j.jde.2016.02.033}. $\,$

\end{thebibliography}

\begin{flushleft}
\vspace{1cm}
\textbf{Abdelrazek Dieb}\\
Department of Mathematics, Faculty of Mathematics and Computer Science \\
University Ibn Khaldoun of Tiaret\\
Tiaret 14000, Algeria\\
and\\
Laboratoire d’Analyse Nonlinéaire et Mathématiques Appliquées\\
Université Abou Bakr Belkaïd,Tlemcen, Algeria\\
\texttt{abdelrazek.dieb@univ-tiaret.dz}
\vspace{.5cm}

\textbf{Isabella Ianni}\\
Dipartimento di Scienze di Base e Applicate per l’Ingegneria\\
{\sl Sapienza} Universit\`a di Roma\\
Via Scarpa 16, 00161 Roma, Italy\\
\texttt{isabella.ianni@uniroma1.it} 
\vspace{.3cm}
\end{flushleft}

\end{document}